\documentclass[11pt]{article}

\usepackage[margin=2.3cm]{geometry}
\usepackage{amsmath,amssymb,amsthm}
\usepackage{enumitem,color}
\usepackage{hyperref}

\usepackage{graphicx,subfigure}
\usepackage{multirow,booktabs,verbatim}
\newcommand{\p}{\partial}
\newcommand{\ds}{\displaystyle}
\newcommand{\f}{\frac}
\usepackage{bm}
\newtheorem{theorem}{Theorem}[section]
\newtheorem{lemma}[theorem]{Lemma}
\newtheorem{corollary}[theorem]{Corollary}
\newtheorem{remark}[theorem]{Remark}

\title{Error estimate of the nonuniform BDF3--L2 method for subdiffusion equations via multiscale solution decomposition}

\author{
Wenlin Qiu\thanks{School of Mathematics, Yunnan Normal University, Kunming 650500, China. Email: qwllkx12379@163.com.}
\and Kexin Li\thanks{Corresponding author. School of Statistics and Mathematics, Yunnan University of Finance and Economics, Kunming 650221, China. Email: likx1213@163.com.}
\and Yiqun Li\thanks{Corresponding author. School of Mathematics and Statistics, Wuhan University, Wuhan 430072, China. Email: YiqunLi24@outlook.com.}
\and Hao Zhang\thanks{School of Computer Science and Engineering, Sun Yat-sen University, Guangzhou 510006, Guangdong, China. Email: zhangh1117@yeah.net.}
}

\date{}

\begin{document}

\maketitle

\begin{abstract}
Numerical experiments reported by Quan and Wu [SIAM J Numer Anal 61 (2023) 2106--2132] show that the observed temporal convergence rates of nonuniform L2 methods for subdiffusion models are not consistent with the theoretically predicted order \(3-\alpha\). This discrepancy suggests that a more refined analysis is needed and motivates the development of a nonuniform BDF3--L2 method for the subdiffusion equation. To account for the initial solution singularity, we employ the multiscale solution decomposition to decompose the original solution and approximate a smoother unknown variable that satisfies the subdiffusion model with a smoother source term. The resulting formulation, however, involves restrictive high-order boundary conditions on the source term and initial data. To overcome this difficulty, we introduce a spectral truncation technique that requires only slightly stronger regularity of the data and a controllable truncation error. We establish high-order regularity estimates of the solution to the truncated problem and develop a nonuniform BDF3--L2 method for its numerical approximation, based on which we derive a rigorous error estimate of temporal convergence order \(2+\alpha\). Numerical experiments are carried out to substantiate the theoretical findings.

\vskip 1mm
\textbf{Keywords:} Subdiffusion, BDF3--L2 method, multiscale solution decomposition, nonuniform meshes, truncation method, error estimate
\end{abstract}

\section{Introduction}
\subsection{Main motivation}

We consider the following subdiffusion model, which is shown to provide very competitive descriptions of challenging phenomena such as anomalously diffusive transport through heterogeneous porous media \cite{Alphonse2016,Carrillo2025,Langlands,Lin,Sun}
\begin{align}
   & \partial_t^{\alpha} u(x,t) - \Delta u(x,t)  = f(x,t), \quad (x,t) \in \Omega \times (0, T], \label{eq1.1} \\
   & u(x,0) = u_0(x), \quad x \in \Omega, \quad  u(x,t) = 0, \quad (x,t) \in \partial\Omega \times (0, T]. \label{eq1.2}
\end{align}
Here $\Omega$ is an open bounded domain in $\mathbb R^d$ ($1\leq d\leq 3$) with smooth boundary $\partial \Omega$,  $u_0$ and $f$ are prescribed functions, and the Caputo fractional derivative is defined as  $\partial_t^{\alpha} u(x,t): = \beta_{1-\alpha} * \partial_t u$ for some $0 < \alpha <1$ and  $\beta_{\nu}:={t^{\nu-1}}/{\Gamma(\nu)}$,
where $*$ denotes the symbol of convolution, and $\Gamma(\cdot)$ denotes the Euler' gamma function  \cite{Pod,Zeng2015}.  In addition we denote the Riemann-Liouville fractional integral as  $I_{t}^{\alpha}v := \beta_{\alpha} * v$.

It is well-known that the solution to model \eqref{eq1.1}-\eqref{eq1.2} exhibits weak singularities near the initial time, which would significantly reduce the numerical accuracy of its approximation schemes. To overcome this issue, numerous techniques, e.g., graded and nonuniform meshes, have been developed to recover the desired sharp convergence rate. For instance, L1 schemes on graded meshes were considered by \cite{Kopteva2019,KoptevaMeng,Liao2018,Stynes}, and L2-$1_\sigma$ schemes on graded meshes were discussed by \cite{Chen,KoptevaMeng,Liao2019,Liao2021}, respectively. In addition, correction-based convolution quadratures have also been developed for nonsmooth solutions by \cite{Jin2017,Jin2020}.
%Stynes-O'Riordan-Gracia \cite{Stynes} analyzed L1 on graded meshes, Kopteva \cite{Kopteva2019} provided a multidimensional framework, and Chen-Stynes \cite{Chen} proved second-order convergence of L2-$1_\sigma$ on fitted meshes. Kopteva-Meng \cite{KoptevaMeng}, Kopteva \cite{Kopteva}, Liao-Li-Zhang \cite{Liao2018}, and Liao-McLean-Zhang \cite{Liao2019,Liao2021} established sharp error bounds for L1- and L2-type schemes on general nonuniform meshes.
Temporal high-order schemes, such as L2-type schemes, were developed by \cite{Lv} on uniform meshes and by \cite{Kopteva} on nonuniform meshes.  The  $H^1$-stability and error bounds for general L2-type schemes on nonuniform meshes were established in  \cite{Quan}. Nevertheless, the observed convergence order in its numerical experiments is not consistent with the theoretical estimates, which indicates that more delicate treatments are required to improve the analysis results and thus motivates the current work.

Recently, the work \cite{LiuM} proposed a multiscale solution decomposition (MSD) for nonlocal-in-time problems, the main idea of which  is to decompose the original singular solution into two parts: one part incorporates known terms that capture the multiscale singularity, while the remaining unknown variable  still satisfies the original equation with a smoother  forcing term, and thus
becomes smoother compared with the original singular solution.
% For smooth solutions, uniform-mesh methods such as the L1 scheme, achieving $(2-\alpha)$-order convergence, were analyzed by Langlands-Henry \cite{Langlands}, Sun-Wu \cite{Sun}, and Lin-Xu \cite{Lin}.
% Higher-order L2-type methods include the L2-$1_\sigma$ formula of Alikhanov \cite{Alikhanov},
% the L1-2 method of Gao-Sun-Zhang \cite{Gao}, and another L2-type scheme of Lv-Xu \cite{Lv} achieving optimal $(3-\alpha)$-order;
% Alikhanov-Huang \cite{AlikhanovHuang} extended L2 methods to variable-coefficient problems.
Motivated by this, we follow \cite{LiuM} to separate the solution to \eqref{eq1.1}--\eqref{eq1.2} into the following form
\begin{equation}\label{qwl001}
    u = u_0 + v + I_{t}^{\alpha} \sum_{\ell=0}^{m-1} (I_{t}^{\alpha} \Delta)^{\ell} \tilde{f} = u_0 + v + \sum_{\ell=0}^{m-1} I_{t}^{(\ell+1)\alpha}(\Delta^{\ell} \tilde{f}), \quad m \in \mathbb N^+ ,\quad \tilde f : = f+ \Delta u_0,
\end{equation}
in which the derivatives of the last term of the same order exhibit singularities of different degrees, and the new variable $v$ exhibits stronger smoothness satisfying the subdiffusion equation with different right-hand side terms
\begin{align}
    & \partial_{t}^{\alpha}v(x,t) - \Delta v(x,t) = I_{t}^{m\alpha} \Delta^{m} \tilde{f}(x,t), \quad (x,t) \in \Omega \times (0,T], \label{qwl01} \\
    & v(x,t) = 0, \quad (x,t) \in \partial\Omega \times [0,T], \quad v(x,0) = 0, \quad x \in \Omega.\nonumber
\end{align}
For $m=0$, it is straightforward to obtain $u = u_0 + v$ by \eqref{qwl001}.
 % From the perspective of  analysis, one could instead approximate model \eqref{qwl01} such that the numerical methods based on the  solution smoothness assumptions for subdiffusion model could be applicable \cite{Alikhanov,AlikhanovHuang, Langlands,Lin,Sun},  and then recover $u$ by the relation \eqref{qwl001}.
 %  In this case, the error analysis for $v$ could be directly applied to $u$ by \eqref{qwl001}, and  the remaining work is to analyze the solution regularity for $v$ and error estimates of the numerical scheme for \eqref{qwl01}.

\subsection{Challenges and contributions} \label{sec1:2}

  From the perspective of analysis, one could instead approximate model \eqref{qwl01} such that the numerical methods based on the solution smoothness assumptions for subdiffusion model could be applicable \cite{Alikhanov,AlikhanovHuang, Langlands,Lin,Sun},  and then recover $u$ by the relation \eqref{qwl001}.
  In this case, the error estimates for $v$ could be directly applied to $u$ by \eqref{qwl001}, and the remaining work is to analyze the solution regularity for $v$ and error estimates of the numerical scheme for \eqref{qwl01}.

  Nevertheless,  the introduction of the MSD method to \eqref{eq1.1}--\eqref{eq1.2}  would require comparatively strong boundary conditions  in simplifying the treatment of complex boundary conditions  \cite{LiuM}
  \begin{equation}\label{req}
  f+\Delta u_0 \in C([0,T];\dot H^{2m}(\Omega))
  \end{equation}
  such that $\Delta^k(f +\Delta u_0)=0$ for $0 \leq k \leq m-1$ on $\partial \Omega \times [0, T]$, and $\Delta^m(f +\Delta u_0) \in L^2(\Omega)$ for each $t \in [0, T]$.
To accommodate the strong boundary conditions, we introduce a spectral truncation based on the eigenfunction expansion of the Dirichlet Laplacian \(-\Delta\)  while requiring only slightly stronger regularity assumption of the source term and initial data
and a controllable truncation error. Specifically, we assume
   \begin{equation}\label{req:new}
   f+\Delta u_0 \in C([0,T]; H^{{\hat\gamma}+2m}(\Omega))
   \end{equation}
for some $0<{\hat\gamma}<1/2$ (see \S \ref{sec2-0}).
  In addition,  we prove the high-order regularity estimates of the solution to \eqref{qwl01} to facilitate numerical analysis. Furthermore, we employ the BDF3--L2 method to construct the time-discrete scheme for \eqref{qwl01}, and accordingly develop its fully discrete finite element scheme. The stability analysis of the schemes are analyzed, based on which the optimal error estimate of the fully discrete scheme is proved.

  The rest of the work is organized as follows: In Section \ref{sec2}, we prove the high-order regularity estimates for the solution to \eqref{qwl01} and introduce the truncated subdiffusion model. In Section \ref{sec3},  we prove the stability of the time-semidiscrete scheme. In  Section \ref{sec4}, we prove the error estimate of the time-discrete scheme and the fully discrete scheme,  and numerical examples are carried out to substantiate the theoretical findings in the last section.

\subsection{Notations and preliminaries}

Let $L^p(\Omega)$ with $1\leq p\leq \infty$ be the Banach space of $p$th power Lebesgue integrable functions on $\Omega$. For $0\leq m\in \mathbb{N}$, let $W^{m,p}(\Omega)$ be the Sobolev space of $L^p$ functions with $m$th weak derivatives in $L^p(\Omega)$. All spaces are equipped with standard norms \cite{AdaFou}. In particular, we set $ H^m(\Omega) := W^{m,2}(\Omega) $ and $ H_0^m(\Omega) $ be the closure of $C^\infty_0(\Omega)$ in $H^m(\Omega) $. For a Banach space $\mathcal{X}$ and some $ T > 0 $, let $ W^{m,p}(0,T;\mathcal{X}) $ be the space of functions in $ W^{m,p}(0,T) $ with respect to $\|\cdot\|_{\mathcal{X}}$. We also denote $C^m[0,T]$ as the space of $m$th continuously differentiable functions on $[0,T]$. For simplicity, we denote $\|\cdot\|:=\|\cdot\|_{L^2(\Omega)}$ and omit $\Omega$ in notations of spatial norms.

Let $\{\lambda_i\}_{i=1}^{\infty}$ and $\{\phi_{i}\}_{i=1}^{\infty}$ denote the eigenvalues and orthonormal eigenfunctions of $-\Delta:H^2\cap H^1_0\rightarrow L^2$, where $\{\phi_i\}_{i=1}^{\infty}$ form an orthonormal basis in $L^2(\Omega)$
 and the eigenvalues $\{\lambda_i\}_{i=1}^{\infty}$ form a positive increasing sequence going to $+\infty$. We
define
\begin{align*}
\dot{H}^q:=\Big\{v\in L^2:|v|_{\dot{H}^q}^2:=\left((-\Delta)^q v,v\right)=\sum_{i=1}^\infty\lambda_i^q(v,\phi_i)^2<\infty\Big\},
\end{align*}
equipped with the norm $\| g\| _{\dot{H} ^{q } }: = \big ( \| g\| _{L^{2} }^{2}+ | g| _{\dot{H} ^{q} }^{2}\big) ^{1/ 2}$. It is known that $\dot{H} ^{0} = L^{2}$, $\dot{H} ^2=H^2 \cap H_{0}^{1}$ and for $2p-3/2<q<2p+1/2$ for some $1\leq p\in\mathbb N$, $v\in \dot H^q$ implies $v\in H^q$ and $\Delta^i v=0$ for $0\leq i\leq p-1$ \cite[Appendix 2.4]{Jin}. Throughout this paper, we use $Q$ to denote a positive constant, where $Q$ may assume different values at different occurrences.

By \cite{Jin}, the solution of \eqref{qwl01} could be expressed as follows
\begin{equation}\label{qwl03}
    v = \int_{0}^{t} E(t-s) I_{s}^{m\alpha} \Delta^{m} \tilde{f}(x,s) ds, \quad E(t) \tilde q : = \sum_{j=1}^{\infty}  t^{\alpha-1} E_{\alpha, \alpha}\left(-\lambda_j t^\alpha\right)\left(\tilde q, \phi_j\right)  \phi_j,
\end{equation}
% with
% $$E(t)q : = \sum_{j=1}^{\infty}  t^{\alpha-1} E_{\alpha, \alpha}\left(-\lambda_j t^\alpha\right)\left(q, \varphi_j\right)  \varphi_j   $$
% where the operator $E(t)$ is defined as follows
% % $$
% E(t):=\frac{1}{2 \pi \mathrm{i}} \int_{\Gamma_{\theta, \delta}} e^{z t}\left(z^\alpha+A\right)^{-1} \mathrm{d} z =\sum_{j=1}^{\infty} \int_0^t(t-s)^{\alpha-1} E_{\alpha, \alpha}\left(-\lambda_j(t-s)^\alpha\right)\left(f(\cdot, s), \varphi_j\right) \mathrm{d} s \varphi_j(x),
% $$
 where   $E_{p,q}(\cdot)$ denotes the two-parameter Mittag-Leffler function
% $\Gamma_{\theta, \delta}=\{z \in \mathbb{C}:|z|=\delta,|\arg z| \leq \theta\} \cup\left\{z \in \mathbb{C}: z=\rho e^{ \pm \mathrm{i} \theta}, \rho \geq \delta\right\}$
% for $\delta \in (\frac{\pi}{2}, \pi)$ with $\delta >0$,
and the following estimate holds
\begin{equation}\label{yq0}
    \|E(t)\|_{L^2 \rightarrow L^2} \leq Q t^{\alpha-1}, \quad t>0.   %\quad\|E(t) \psi\|_{\check{H}^s} \leq Q t^{-(s-r) / 2}\|\psi\|_{\check{H}^r}, \quad \psi \in \check{H}^r .
\end{equation}

\section{Solution regularity for truncated model}\label{sec2}

\subsection{Regularity estimates}

Motivated by the discussions in  \S \ref{sec1:2}, it suffices to prove   the regularity estimates of the solution to \eqref{qwl01} in the following theorems.

\begin{theorem}\label{thm1}
  Suppose that \eqref{req} holds, $\|\Delta^{m+2} u_0\|$   is bounded as well as $\|\Delta^{m+1}\partial_{t}f(\cdot,t)\| \leq Q t^{-\sigma}$ for $t>0$ and some $0<\sigma < 1$. Then the following estimate holds
\begin{equation*}
\|\partial_{t}v(\cdot,t)\|_{\dot{H}^{2}} \leq Q t^{(m+1)\alpha - 1}, \quad t \in (0,T].
\end{equation*}

In addition, suppose  $\|\Delta^{m+1}\partial_{t}^{2}f(\cdot,t)\| \leq Q t^{-\sigma}$ for $t>0$, then
\begin{equation*}
\|\partial_{t}^{2}v(\cdot,t)\|_{\dot{H}^{2}}  \leq Q t^{(m+1)\alpha - 2}, \quad t \in (0,T].
\end{equation*}
\end{theorem}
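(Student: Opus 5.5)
We will work directly with the mild-solution representation \eqref{qwl03}, writing $g(t):=I_t^{m\alpha}\Delta^m\tilde f(\cdot,t)$, so that $v=\int_0^t E(s)g(t-s)\,ds$ after the change of variables $s\mapsto t-s$. The point of this form is that time derivatives fall on $g$ and not on the singular kernel $E$. Since $\Delta$ commutes with $E(t)$ under the Dirichlet eigenfunction expansion, the $\dot H^2$ seminorm of $v$ becomes the $L^2$ norm of the same convolution with $g$ replaced by $\Delta g = I_t^{m\alpha}\Delta^{m+1}\tilde f$. The boundary conditions in \eqref{req} are exactly what permit moving $\Delta^{m}$, and one further $\Delta$, through the expansion.

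\textbf{Step 1: structure of $g$.} Write $\tilde f(t)=\tilde f(0)+\int_0^t\partial_s f\,ds$, using $\partial_t\tilde f=\partial_t f$. This gives
\begin{equation*}
\Delta g(t)=\beta_{m\alpha+1}(t)\,\Delta^{m+1}\tilde f(0)+I_t^{m\alpha+1}\Delta^{m+1}\partial_t f .
\end{equation*}
Since $\|\Delta^{m+1}\tilde f(0)\|\le\|\Delta^{m+1}f(0)\|+\|\Delta^{m+2}u_0\|$ is bounded, we obtain
\begin{equation*}
\|\partial_t\Delta g(t)\|\le Q t^{m\alpha-1}+Q\,(\beta_{m\alpha}*s^{-\sigma})(t)\le Q t^{m\alpha-1}+Qt^{m\alpha-\sigma},
\end{equation*}
and $t^{m\alpha-\sigma}\le Qt^{m\alpha-1}$ on $(0,T]$. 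The same computation gives $\|\partial_t^2\Delta g(t)\|\le Qt^{m\alpha-2}$: differentiate the first term twice, and for the second term write $\partial_t f(s)=\partial_tf(0)+\int\partial_t^2 f$ if $\partial_tf(0)$ makes sense. Otherwise, differentiate $I^{m\alpha+1}$ twice to get $I^{m\alpha-1}$-type terms, which needs care when $m\alpha<1$. In that case I would instead use $\partial_t^2 I^{m\alpha+1}h=\partial_t(\beta_{m\alpha}*h)=\beta_{m\alpha}(t)h(0^+)+\beta_{m\alpha}*\partial_t h$, or bound the difference quotient directly.

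\textbf{Step 2: first derivative.} Differentiating $v=\int_0^tE(s)g(t-s)ds$ gives
\begin{equation*}
\partial_t v=E(t)g(0)+\int_0^tE(s)\partial_tg(t-s)\,ds .
\end{equation*}
Here $g(0)=0$ and $\Delta g(0)=0$ because $m\ge1$; for $m=0$ a separate check with $g=\tilde f$ is needed, which I would handle by first integrating by parts. Applying \eqref{yq0},
\begin{equation*}
\|\Delta\partial_tv\|\le Q\int_0^t s^{\alpha-1}(t-s)^{m\alpha-1}ds=Qt^{(m+1)\alpha-1}.
\end{equation*}
The $L^2$ part of the $\dot H^2$ norm is handled identically, or by Poincaré.

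\textbf{Step 3: second derivative.} Differentiating once more naively produces $\int_0^t E(s)\partial_t^2 g(t-s)\,ds$ with a non-integrable factor $(t-s)^{m\alpha-2}$ near $s=t$, and this is the main obstacle. I would split the interval at $t/2$. On $(0,t/2)$, differentiate the form $\int_0^{t/2}E(s)\partial_tg(t-s)ds$ in $t$; this yields a boundary term $\tfrac12E(t/2)\partial_t g(t/2)$ plus $\int_0^{t/2}E(s)\partial_t^2g(t-s)ds$, and both are bounded by $Qt^{\alpha-1}t^{m\alpha-1}$ and $Qt^{\alpha}t^{m\alpha-2}$ respectively. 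On $(t/2,t)$, rewrite as $\int_0^{t/2}E(t-r)\partial_tg(r)dr$ and differentiate, which gives $\tfrac12E(t/2)\partial_tg(t/2)$ plus $\int_0^{t/2}E'(t-r)\partial_tg(r)dr$. Here one needs the additional standard resolvent estimate $\|E'(t)\|\le Qt^{\alpha-2}$, obtained from the contour-integral representation or from Mittag-Leffler asymptotics. This gives $Qt^{\alpha-2}\int_0^{t/2}r^{m\alpha-1}dr=Qt^{(m+1)\alpha-2}$. Collecting the terms proves the second bound.
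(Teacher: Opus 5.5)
Your proposal is correct. For the second derivative it takes a genuinely different route from the paper.

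\textbf{First derivative.} This step coincides with the paper's \eqref{der1}--\eqref{qwl04}: $\partial_t v$ is the convolution of $E$ with $\partial_t g=\beta_{m\alpha}\Delta^m\tilde f(\cdot,0)+I_t^{m\alpha}\Delta^m\partial_t f$, bounded by a Beta integral.

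\textbf{Second derivative, the paper's route.} The paper never differentiates $\partial_t g$ a second time. It convolves the singular part $\beta_{m\alpha}\Delta^m\tilde f(\cdot,0)$ with $E$ in closed form via the Mittag-Leffler series, giving $t^{(m+1)\alpha-1}E_{\alpha,(m+1)\alpha}(-\lambda_j t^\alpha)$ mode by mode. It differentiates this exactly to get \eqref{qwl07} and then uses uniform boundedness of $E_{\alpha,(m+1)\alpha-1}(-x)$ for $x\ge 0$. The regular part vanishes at $t=0$, so the derivative is moved onto the source, as in \eqref{qwl06}. Rerunning the first-derivative argument with $f$ replaced by $\partial_t f$ then gives the better bound $Qt^{(m+1)\alpha-1}$.

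\textbf{Second derivative, your route.} You use only the pointwise bounds $\|\Delta\partial_t^j g(t)\|\le Qt^{m\alpha-j}$, $j=1,2$. You control the non-integrable singularity of $\partial_t^2 g$ at $0$ by splitting at $t/2$. The cost is the kernel estimate $\|E'(t)\|\le Qt^{\alpha-2}$, which is immediate from $\frac{d}{dt}[t^{\alpha-1}E_{\alpha,\alpha}(-\lambda t^\alpha)]=t^{\alpha-2}E_{\alpha,\alpha-1}(-\lambda t^\alpha)$, the same identity behind \eqref{qwl07}.

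\textbf{What each buys.} Your argument is the standard, more robust one. It does not need the singular part of the source to be an explicit power of $t$, and the same splitting handles higher derivatives. The paper's computation gives the explicit leading singular term and the one-order gain on the regular part. That explicit structure is exactly what it reuses in Theorem~\ref{thm2} by differentiating \eqref{kx01} and \eqref{kx02} further.

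\textbf{The point you left open in Step 1 resolves itself.} The bound $\|\Delta^{m+1}\partial_t^2 f(\cdot,t)\|\le Qt^{-\sigma}$ with $\sigma<1$ is integrable near $0$. Hence $h(0^+)=\Delta^{m+1}\partial_t f(\cdot,0^+)$ exists and is bounded. So $\partial_t^2 I_t^{m\alpha+1}h=\beta_{m\alpha}h(0^+)+\beta_{m\alpha}*\partial_t h$ holds for every value of $m\alpha$, including $m\alpha<1$, and is $O(t^{m\alpha-1})$. The paper relies on the same fact tacitly when it bounds the right-hand side of \eqref{qwl06} via \eqref{qwl04}.

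\textbf{A caveat shared with the paper.} Moving the extra $\Delta$ through $E$ needs $\Delta^m\tilde f\in\dot H^2$, that is, also $\Delta^m\tilde f=0$ on $\partial\Omega$. This is not part of \eqref{req}. It holds if the hypotheses on $\|\Delta^{m+1}\cdot\|$ are read spectrally, which is how the paper implicitly uses them too.
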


\begin{proof}
We differentiate the solution representation \eqref{qwl03} and apply $I_{t}^{m\alpha} \Delta^{m} \tilde{f}(x,t)|_{t=0} = 0$
to obtain
\begin{align}
    \partial_{t}v &= \int_{0}^{t} E(t-s) \partial_{s} I_{s}^{m\alpha} \Delta^{m} \tilde{f}(x,s) ds \nonumber \\
    &= \int_{0}^{t} E(t-s) \left[ \beta_{m\alpha} \Delta^{m} \tilde{f}(x,0) + I_{s}^{m\alpha} \Delta^{m} \partial_{s} f(x,s) \right] ds. \label{der1}
\end{align}
By the assumptions of the theorem, we apply the norm $\|\cdot\|_{\dot{H}^{2}}$ to both sides of \eqref{der1} and apply the estimate \eqref{yq0} to obtain
\begin{align}
    \|\partial_{t}v\|_{\dot{H}^{2}} &\leq Q \int_{0}^{t} \left\| E(t-s) \left[ \beta_{m\alpha} \Delta^{m} \tilde{f}(\cdot,0) + I_{s}^{m\alpha} \Delta^{m} \partial_{s} f(\cdot,s) \right] \right\|_{\dot{H}^{2}} ds \nonumber \\
    % &\leq Q \int_{0}^{t} (t-s)^{\alpha - 1} \left\| \beta_{m\alpha} \Delta^{m} \tilde{f}(\cdot,0) + I_{s}^{m\alpha} \Delta^{m} \partial_{s} f(\cdot,s) \right\|_{\dot{H}^{2}} ds \nonumber \\
    &\leq Q \int_{0}^{t} (t-s)^{\alpha - 1} \left( \beta_{m\alpha} \| \Delta^{m+1} \tilde{f}(\cdot,0) \| + I_{s}^{m\alpha} \| \Delta^{m+1} \partial_{s} f(\cdot,s) \| \right) ds \nonumber \\
    &\leq Q \int_{0}^{t} (t-s)^{\alpha - 1} \left( \beta_{m\alpha} + I_{s}^{m\alpha} s^{-\sigma} \right) ds \leq Q \int_{0}^{t} (t-s)^{\alpha - 1} s^{m\alpha - 1} ds \leq Q t^{(m+1)\alpha- 1}. \label{qwl04}
\end{align}
By the definition of the Mittag-Leffler function, we evaluate the first right-hand side term of \eqref{der1} as
\begin{align*}
    &\int_{0}^{t} E(t-s) \beta_{m\alpha} \Delta^{m} \tilde{f}(x,0) ds \nonumber \\
    &= \sum_{j=1}^{\infty} \int_{0}^{t} (t-s)^{\alpha - 1} E_{\alpha,\alpha}(-\lambda_{j}(t-s)^{\alpha}) \beta_{m\alpha}(s) ds (\Delta^{m} \tilde{f}(x,0), \phi_{j}) \phi_{j} \nonumber \\
    &= \sum_{j=1}^{\infty} \int_{0}^{t} \sum_{k=0}^{\infty}  \frac{(-\lambda_j)^k(t-s)^{\alpha k + \alpha - 1}}{\Gamma(k\alpha+\alpha)} \frac{s^{m\alpha-1}}{\Gamma(m\alpha)}ds (\Delta^{m} \tilde{f}(x,0), \phi_{j}) \phi_{j} \nonumber \\
    &=  \sum_{j=1}^{\infty} t^{(m+1)\alpha - 1} E_{\alpha,(m+1)\alpha}(-\lambda_{j} t^{\alpha}) (\Delta^{m} \tilde{f}(x,0), \phi_{j}) \phi_{j}.
\end{align*}
Differentiate the above equation with respect to $t$ to get
\begin{align}
    &\partial_{t} \int_{0}^{t} E(t-s) \beta_{m\alpha} \Delta^{m} \tilde{f}(x,0) ds   =  \sum_{j=1}^{\infty} t^{(m+1)\alpha - 2} E_{\alpha,(m+1)\alpha - 1}(-\lambda_{j} t^{\alpha}) (\Delta^{m} \tilde{f}(x,0), \phi_{j}) \phi_{j}. \label{qwl07}
\end{align}
By the decay property of $E_{\alpha,(m+1)\alpha - 1}(-z)$ for $z \to +\infty$, we have $|E_{\alpha,(m+1)\alpha - 1}(-\lambda_{j} t^{\alpha})| \leq Q$ for some constant $Q$ independent from $j$ such that
\begin{equation*}
    \left\| \partial_{t} \int_{0}^{t} E(t-s) \beta_{m\alpha} \Delta^{m} \tilde{f}(x,0) ds \right\|_{\dot{H}^{2}}  \leq Q t^{(m+1)\alpha - 2} \| \Delta^{m+1} \tilde{f}(x,0) \|.
\end{equation*}
We then differentiate the second right-hand side term of \eqref{der1} to get
\begin{equation}\label{qwl06}
    \partial_{t} \int_{0}^{t} E(t-s) I_{s}^{m\alpha} \Delta^{m} \partial_{s} f(x,s) ds = \int_{0}^{t} E(t-s) \partial_{s} I_{s}^{m\alpha} \Delta^{m} \partial_{s} f(x,s) ds.
\end{equation}
Take the norm $\|\cdot\|_{\dot H^2}$ on both sides of \eqref{qwl06} and use \eqref{qwl04} to obtain
\begin{align*}
    \bigg\| \partial_{t} \int_{0}^{t} E(t-s) I_{s}^{m\alpha}  \Delta^{m} \partial_{s} f(x,s) ds \bigg\| \leq Q t^{(m+1)\alpha - 1}.
\end{align*}
We combine the above two estimates to get the estimate of $\partial_{t}^{2}v$, which completes the proof.
\end{proof}

\vskip 1mm
We then prove the high-order regularity estimates of the solution to \eqref{qwl01}.

\begin{theorem}\label{thm2}
Suppose the assumptions in Theorem  \ref{thm1} hold and that  $\|\Delta^{m+1} \partial_t^3 f(\cdot,t)\| \leq Q t^{-\sigma}$ for $t>0$, then
\begin{align}
\|\partial_t^3 v(\cdot,t)\|_{\dot H^2} \le Q t^{(m+1)\alpha-3}, \quad t \in (0,T].\label{thm2:e1}
\end{align}

In addition, supposing  $\|\Delta^{m}\partial_{t}^{4}f(\cdot,t)\| \leq Q t^{-\sigma}$, we have

\begin{equation}\label{thm2:e2}
    \|\partial_{t}^{4}v(\cdot,t)\|  \leq Q t^{(m+1)\alpha - 4}, \quad t \in (0,T].
\end{equation}
%If $\|\Delta^{m}\tilde{f}(\cdot,0)\|< \infty$, $\|\Delta^{m}\partial_{t}^p f(\cdot,0)\| < \infty$ with $p=1,2,3$ and  for some $\sigma < 1$, then it holds that

\end{theorem}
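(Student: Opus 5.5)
We follow the template of Theorem \ref{thm1} and differentiate the representation \eqref{qwl03} repeatedly, separating each time the terms generated by the initial values of the source from a remainder convolution that can be differentiated once more. Write $g(s):=I_s^{m\alpha}\Delta^m\tilde f(\cdot,s)$. Taylor-expanding $\Delta^m\tilde f(\cdot,s)$ at $s=0$ up to second order, with an integral remainder, gives
\begin{equation*}
g(s)=\beta_{m\alpha+1}(s)\,\Delta^m\tilde f(\cdot,0)+\beta_{m\alpha+2}(s)\,\Delta^m\partial_t f(\cdot,0)+\beta_{m\alpha+3}(s)\,\Delta^m\partial_t^2 f(\cdot,0)+I_s^{m\alpha+3}\Delta^m\partial_s^3 f(\cdot,s),
\end{equation*}
where we used $\partial_t\tilde f=\partial_t f$. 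Each monomial term is handled exactly as in \eqref{qwl07}: we have $\int_0^t E(t-s)\beta_{\mu}(s)\,ds\,\psi=\sum_j t^{\alpha+\mu-1}E_{\alpha,\alpha+\mu}(-\lambda_j t^\alpha)(\psi,\phi_j)\phi_j$. Differentiating three times lowers the second Mittag-Leffler index by three, giving $t^{\alpha+\mu-4}E_{\alpha,\alpha+\mu-3}(-\lambda_jt^\alpha)$, and the boundedness of $E_{\alpha,\beta}(-z)$ for $z\ge0$ (valid for any real $\beta$) yields an $\dot H^2$ bound $Q\,t^{\alpha+\mu-4}\|\Delta^{m+1}\partial_t^k f(\cdot,0)\|$. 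With $\mu=m\alpha+1+k$ for $k=0,1,2$, the worst exponent, $k=0$, is $(m+1)\alpha-3$. This requires $\|\Delta^{m+1}\partial_t^k f(\cdot,0)\|<\infty$. For $k=0$ this comes from \eqref{req} together with the bound on $\|\Delta^{m+2}u_0\|$. For $k=1,2$ we would argue that the hypotheses $\|\Delta^{m+1}\partial_t^{k+1}f\|\le Qt^{-\sigma}$ with $\sigma<1$ give integrability, hence continuity of $\Delta^{m+1}\partial_t^kf$ up to $t=0$.

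For the remainder $R(s):=I_s^{m\alpha+3}\Delta^m\partial_s^3 f$, we have $R^{(j)}(0)=0$ for $j\le2$, so $\partial_t^3\int_0^tE(t-s)R(s)\,ds=\int_0^tE(t-s)\,I_s^{m\alpha}\Delta^m\partial_s^3f(\cdot,s)\,ds$. Applying \eqref{yq0} and the hypothesis on $\|\Delta^{m+1}\partial_t^3 f\|$, exactly as in \eqref{qwl04}, bounds this in $\dot H^2$ by $Q\int_0^t(t-s)^{\alpha-1}s^{m\alpha-\sigma}\,ds\le Q\,t^{(m+1)\alpha-\sigma}$, which is dominated by $Qt^{(m+1)\alpha-3}$ on $(0,T]$. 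This proves \eqref{thm2:e1}.

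For \eqref{thm2:e2} we repeat the argument with one more Taylor term, $\beta_{m\alpha+4}\Delta^m\partial_t^3f(\cdot,0)$, and remainder $I^{m\alpha+4}\Delta^m\partial_s^4f$, now measuring only in $L^2$. The monomial terms after four derivatives give $t^{\alpha+\mu-5}E_{\alpha,\alpha+\mu-4}$, hence $L^2$ bounds $Qt^{(m+1)\alpha-4}\|\Delta^m\partial_t^kf(\cdot,0)\|$ for $k=0,\dots,3$. The remainder gives $\int_0^tE(t-s)I^{m\alpha}_s\Delta^m\partial_s^4f\,ds$, bounded in $L^2$ using $\|\Delta^m\partial_t^4f\|\le Qt^{-\sigma}$. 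Only $L^2$ norms of $\Delta^m(\cdot)$ appear, which matches the weaker hypothesis stated.

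The main obstacle is bookkeeping rather than a new idea. We must justify that $E_{\alpha,\beta}(-\lambda_jt^\alpha)$ is uniformly bounded even when the index $\beta=(m+1)\alpha-2$ or $-3$ is nonpositive. This holds by the standard asymptotic expansion of the Mittag-Leffler function for any real $\beta$. We must also check that the Taylor initial values $\Delta^{m+1}\partial_t^kf(\cdot,0)$ are finite under the stated singular-derivative hypotheses. If the latter fails, we would fall back on differentiating under the convolution as in \eqref{der1}, splitting at $s=t/2$ to keep the singularities of $E(t-s)$ and of $\partial_s^kf$ apart.
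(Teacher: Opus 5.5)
Your proposal is correct and is essentially the paper's argument. The paper peels off the initial-value terms one derivative at a time using $\partial_s I_s^{m\alpha}h=\beta_{m\alpha}h(0)+I_s^{m\alpha}\partial_s h$ (its terms $\partial_t P_1$, $P_{21}$, $P_{22}$), which is your Taylor expansion of $\Delta^m\tilde f$ carried out incrementally, and it reaches the same Mittag-Leffler terms $t^{(m+1)\alpha-3+k}E_{\alpha,(m+1)\alpha-2+k}(-\lambda_j t^\alpha)$, $k=0,1,2$, and the same remainder $\int_0^t E(t-s)\,I_s^{m\alpha}\Delta^m\partial_s^3 f\,ds$; one small correction is that \eqref{req} only controls $\Delta^m\tilde f$, so finiteness of $\|\Delta^{m+1}\tilde f(\cdot,0)\|$ (in particular of $\|\Delta^{m+1}f(\cdot,0)\|$) does not follow from it and must be taken as an implicit hypothesis, which is also how the paper treats it in the proof of Theorem~\ref{thm1}.
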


\begin{proof}
We recall from \eqref{der1}, \eqref{qwl07} and \eqref{qwl06} that
\begin{equation}\begin{array}{l}
\partial_t^2 v = \ds \sum_{j=1}^{\infty} t^{(m+1)\alpha - 2} E_{\alpha,(m+1)\alpha - 1}(-\lambda_{j} t^{\alpha}) (\Delta^{m} \tilde{f}(x,0), \phi_{j}) \phi_{j} \\[0.15in]
\ds  \qquad + \int_{0}^{t} E(t-s) \partial_{s} I_{s}^{m\alpha} \Delta^{m} \partial_{s} f(x,s) ds =: P_1 + P_2.\label{yq1}
\end{array}
\end{equation}

To estimate $\p_t^3 v$, we are now in the position to evaluate $\p_t P_1$ and $\p_t P_2$, respectively.
We differentiate $P_1$  with respect to time to obtain
\begin{align}
  \partial_t P_1
   =  \sum_{j=1}^{\infty} t^{(m+1)\alpha - 3} E_{\alpha,(m+1)\alpha - 2}(-\lambda_{j} t^{\alpha}) (\Delta^{m} \tilde{f}(x,0), \phi_{j}) \phi_{j}, \label{kx01}
\end{align}
which follows $\left|E_{\alpha,(m+1)\alpha - 2}(-\lambda_{j} t^{\alpha})\right| \leq Q$ to get
\begin{align}
  \|\partial_t P_1 \|_{\dot H^2}  \leq Q  t^{(m+1)\alpha - 3}  \| \Delta^{m+1} \tilde{f}(\cdot,0)\|, \quad t\in (0,T].
  \label{yq2}
\end{align}
To analyze $\partial_t P_2$, we reformulate  it  in \eqref{yq1} as follows
\begin{align}
  \ P_2 & = \int_{0}^{t} E(t-s) \partial_{s} I_{s}^{m\alpha} \Delta^{m} \partial_s f(x,s) ds \nonumber \\
    &= \int_{0}^{t} E(t-s) \left[ \beta_{m\alpha} \Delta^{m} \partial_s f(x,0) + I_{s}^{m\alpha} \Delta^{m} \partial_s^2 f(x,s) \right] ds, \label{der2}
\end{align}
in which we apply \eqref{qwl07} to rewrite the   right-hand side terms in \eqref{der2} as follows
\begin{equation}\label{kx02}
   \begin{split}
       \partial_t P_2 & =  \sum_{j=1}^{\infty} t^{(m+1)\alpha - 2} E_{\alpha,(m+1)\alpha - 1}(-\lambda_{j} t^{\alpha}) (\Delta^{m} \partial_s f(x,0), \phi_{j}) \phi_{j}  \\
& + \partial_t \int_{0}^{t} E(t-s)I_{s}^{m\alpha} \Delta^{m} \partial_s^2 f(x,s) ds = : P_{21} + P_{22}.
   \end{split}
\end{equation}
We utilize $I_{s}^{m\alpha} \Delta^{m} \partial_s^2 f(x,s)|_{s=0} = 0$ to reformulate $P_{22}$ as follows
\begin{align*}
    P_{22} & = \int_{0}^{t} E(t-s) \partial_{s} I_{s}^{m\alpha} \Delta^{m}\partial_s^2 f(x,s) ds  \\
    & = \int_{0}^{t} E(t-s) \left[ \beta_{m\alpha} \Delta^{m} \partial_s^2f(x,0) + I_{s}^{m\alpha} \Delta^{m} \partial_s^3 f(x,s) \right] ds.
\end{align*}
We follow the assumptions of the theorem, \eqref{kx02} as well as \eqref{qwl04} to further bound
\begin{equation*}
    \|\p_t P_2\|_{\dot  H^2} \le Qt^{(m+1)\alpha - 2} + Q t^{(m+1)\alpha - 1} \le Q t^{(m+1)\alpha - 2},
\end{equation*}
which, combined with \eqref{yq1} and \eqref{yq2}, gives \eqref{thm2:e1}.
% Thus, with given assumptions we get $\|P_{21}\|_{\dot H^2}\leq Q t^{(m+1)\alpha - 2}$ and
% \begin{align*}
%    \|P_{22}\|_{\dot H^2} &\leq Q \int_{0}^{t} \left\| E(t-s) \left[ \beta_{m\alpha} \Delta^{m} \partial_s^2f(\cdot,0) + I_{s}^{m\alpha} \Delta^{m} \partial_s^3 f(\cdot,s) \right] \right\|_{\dot{H}^{2}} ds \nonumber \\
%     &\leq Q \int_{0}^{t} (t-s)^{\alpha - 1} \left\| \beta_{m\alpha} \Delta^{m} \partial_s^2f(\cdot,0) + I_{s}^{m\alpha} \Delta^{m} \partial_s^3 f(\cdot,s) \right\|_{\dot{H}^{2}} ds \nonumber \\
%     &\leq Q \int_{0}^{t} (t-s)^{\alpha - 1} \left( \beta_{m\alpha} \| \Delta^{m+1} \partial_s^2f(\cdot,0) \| + I_{s}^{m\alpha} \| \Delta^{m+1} \partial_s^3f(\cdot,s) \| \right) ds \nonumber \\
%     &\leq Q \int_{0}^{t} (t-s)^{\alpha - 1} \left( \beta_{m\alpha} + I_{s}^{m\alpha} s^{-\sigma} \right) ds \leq Q t^{(m+1)\alpha - 1}.
% \end{align*}

By $\partial_t^4 v = \partial_t^2(P_1+P_2)$, we  further differentiate  \eqref{kx01} and \eqref{kx02} to get
\begin{align*}
    \partial_t^4 v &= \sum_{j=1}^{\infty} t^{(m+1)\alpha - 4} E_{\alpha,(m+1)\alpha - 3}(-\lambda_{j} t^{\alpha}) (\Delta^{m} \tilde{f}(x,0), \phi_{j}) \phi_{j} \\
& \quad  + \sum_{j=1}^{\infty} t^{(m+1)\alpha - 3} E_{\alpha,(m+1)\alpha - 2}(-\lambda_{j} t^{\alpha}) (\Delta^{m} \partial_s f(x,0), \phi_{j}) \phi_{j} \\
& \quad + \partial_t \int_{0}^{t} E(t-s) \left[ \beta_{m\alpha} \Delta^{m} \partial_s^2f(x,0) + I_{s}^{m\alpha} \Delta^{m} \partial_s^3 f(x,s) \right] ds,
\end{align*}
in which we combine \eqref{qwl07},  \eqref{qwl06} and  \eqref{der2}
% \begin{align*}
%     &\partial_{t} \int_{0}^{t} E(t-s) \beta_{m\alpha} \Delta^{m} \partial_s^2f(x,0) ds   =  \sum_{j=1}^{\infty} t^{(m+1)\alpha - 2} E_{\alpha,(m+1)\alpha - 1}(-\lambda_{j} t^{\alpha}) (\Delta^{m} \partial_s^2f(x,0), \phi_{j}) \phi_{j},
% \end{align*}
% and
% \begin{align*}
%     \partial_{t} \int_{0}^{t} E(t-s) I_{s}^{m\alpha} \Delta^{m} \partial_{s}^3 f(x,s) ds &= \int_{0}^{t} E(t-s) \partial_{s} I_{s}^{m\alpha} \Delta^{m} \partial_{s}^3 f(x,s) ds \\
% & = \int_{0}^{t} E(t-s) \left[ \beta_{m\alpha} \Delta^{m} \partial_{s}^3f(x,0) + I_{s}^{m\alpha} \Delta^{m} \partial_{s}^4 f(x,s) \right] ds.
% \end{align*}
to prove \eqref{thm2:e2}. We thus  complete the proof.
\end{proof}

\subsection{Truncated model}\label{sec2-0}

We recall the eigen-expansions $\{\lambda_q, \phi_q\}_{q=1}^{\infty}$ of  $-\Delta : H^2 (\Omega) \cap H_0^1(\Omega) \rightarrow L^2(\Omega)$ to define the truncated version of $g$ as follows
\begin{align}\label{gg}
     \Pi_L g := \sum_{q=1}^L (g,\phi_q) \, \phi_q,\quad  L\in\mathbb N^+
\end{align}
with
\begin{align}
\|g(\cdot, t) -   \Pi_L g(\cdot, t)\|
\le \lambda_{L+1}^{-\hat\gamma/2}\|g(\cdot, t)\|_{\dot H^{\hat\gamma}(\Omega)}
\le Q\lambda_{L+1}^{-\hat\gamma/2}\|g\|_{C([0,T];H^{\hat\gamma}(\Omega))}, \quad t\in[0,T], \label{er:trun}
\end{align}
where we use the equivalence
$C([0,T];H^{\hat\gamma}(\Omega))=C([0,T];\dot H^{\hat\gamma}(\Omega))$
for $0<\hat\gamma< 1/2$ \cite{Jin}.
By choosing a sufficiently large $L$, the truncation error \eqref{er:trun} could be sufficiently small such that the truncated function $\Pi_L g$ provides an accurate approximation to $g$.
Since $\Delta^{l} \phi_q= 0$ on $\partial\Omega\times[0,T]$
for any $0 \le  l \in \mathbb N$,
we thus have $\Delta^{l} \Pi_L g =0$ on
$\partial\Omega\times[0,T]$, which belongs to $C([0,T];\dot{H}^{2l}(\Omega))$.

Motivated by the above discussions, we assume the condition  \eqref{req:new}
% \begin{equation*}
% \tilde f \in C([0,T]; H^{{\hat\gamma}+2m}(\Omega))
% \end{equation*}
% for some $0<{\hat\gamma}<1/2$ and
and  replace $\Delta^m \tilde f$ with its truncated function $\Pi_L(\Delta^m \tilde {f})$  in \eqref{qwl01}. The function $\Pi_L(\Delta^m \tilde {f})$  automatically satisfies $ \Pi_L(\Delta^m \tilde f) \in     C([0, T];\dot H^{2l}(\Omega))$ for any $0 \le l \in \mathbb N$ such that  the  relatively strong constraints in \eqref{req} could be relaxed.
%while the additional cost due to truncation of the solutions can be made arbitrarily smal.
We combine \eqref{qwl001}--\eqref{qwl01} to reconsider the following truncated subdiffusion models
\begin{align}
   & \partial_t^\alpha \hat v(x,t) -  \Delta \hat v(x,t) =  I_t^{m \alpha} \Pi_L (\Delta^{m} \tilde{f})(x,t)\label{yq4} \\
     & \hat v(x,t) = 0, \quad (x,t) \in \partial\Omega \times [0,T], \quad \hat v(x,0) = 0, \quad x \in \Omega \nonumber.
\end{align}
By \eqref{qwl01}, we find out that the error $v-\hat v$ still satisfies the subdiffusion model with the right-hand side term replaced by $I_t^{m \alpha} (\mathrm I-\Pi_L)( \Delta^{m} \tilde f )$.
By the stability estimate for subdiffusion equations \cite[Theorem~6.12]{Jin} and \eqref{er:trun}, we thus obtain
\begin{align}
    \|v-\hat{v}\|
&\leq  Q\Big\| \beta_{\alpha} *  I_t^{m\alpha}  (\mathrm I-\Pi_L)(\Delta^{m}\tilde{f})  \Big\|  \leq Q\, I_t^{(m+1)\alpha} \|(\mathrm I-\Pi_L)(\Delta^{m}\tilde{f})\| \label{er:Solv}\\
&\qquad \qquad \le  Q\lambda_{L+1}^{-{\hat\gamma}/2}\|\tilde f\|_{C([0,T];H^{2m+\hat\gamma}(\Omega))}.\nonumber
\end{align}
We truncation the functions $\Delta^{\ell} \tilde{f}$ for each $0 \le \ell \le m-1$ and follow \eqref{qwl001}  to define $\hat u$ as follows
\begin{align}\label{yq5}
\hat u(x,t): = u_0(x) + \hat v(x,t) + \sum_{\ell=0}^{m-1} I_{t}^{(\ell+1)\alpha}\Pi_L(\Delta^{\ell} \tilde{f})(x,t),
\end{align}
which gives
$$
u-\hat{u}= (v-\hat{v}) + \sum_{\ell=0}^{m-1} I_{t}^{(\ell+1)\alpha}(\mathrm I-\Pi_L)(\Delta^{\ell} \tilde{f})(x,t).
$$
We further have
\begin{align}
    \|u-\hat{u}\| &\leq \|v-\hat{v}\| + \sum_{\ell=0}^{m-1} I_{t}^{(\ell+1)\alpha}\|(\mathrm I-\Pi_L)(\Delta^{\ell} \tilde{f})\| \nonumber \\
    & \leq \sum_{\ell=0}^{m} I_{t}^{(\ell+1)\alpha}\|(\mathrm I-\Pi_L)(\Delta^{\ell} \tilde{f})\| \leq Q\lambda_{L+1}^{-{\hat\gamma}/2}\|  \tilde f\|_{C([0,T];H^{2m+\hat\gamma}(\Omega))}, \label{uuf}
\end{align}
which, combined with \eqref{er:Solv}, demonstrates that  the price paid of the truncation  on the solutions is an additional error of order $Q\lambda_{L+1}^{-{\hat\gamma}/2}\|\tilde f\|_{C([0,T]; H^{2m+\hat\gamma}(\Omega))}$ and could  be made arbitrarily small by choosing a sufficiently large $L$.

% {\color{blue} {We also note that the regularity results in Theorems \ref{thm1}--\ref{thm2} could also be applied to the the truncated model \eqref{yq4}. }}
	
% \begin{lemma}\label{lem1}
%   Suppose that \eqref{req:new} holds, $\|\Delta^{m+2} u_0\|$   is bounded as well as $\|\Delta^{m+1}\partial_{t}f(\cdot,t)\| \leq Q t^{-\sigma}$ for $t>0$ and some $0<\sigma < 1$. Then the following estimate holds
% \begin{equation*}
% \|\partial_{t}v(\cdot,t)\|_{\dot{H}^{2}} \leq Q t^{(m+1)\alpha - 1}, \quad t \in (0,T].
% \end{equation*}

% In addition, suppose   $\|\Delta^{m+1}\partial_{t}^{2}f(\cdot,t)\| \leq Q t^{-\sigma}$ for $t>0$, we have
% \begin{equation*}
% \|\partial_{t}^{2}v(\cdot,t)\|_{\dot{H}^{2}}  \leq Q t^{(m+1)\alpha - 2}, \quad t \in (0,T].
% \end{equation*}
% \end{lemma}

% \begin{lemma}
% Regularity on $\hat v$
% \end{lemma}

% \begin{proof}

% \end{proof}

To facilitate numerical approximation, we apply  the operator $^{RL}\partial^{1-\alpha}_t: = \p_t I_t^{\alpha}$  on  both sides of \eqref{yq4}  to obtain its equivalent  model
\begin{align}
   & \partial_t \hat v(x,t) - \partial_t^{1-\alpha} \Delta \hat v(x,t) = \p_t I_t^{(m+1)\alpha}  \Pi_L (\Delta^{m} \tilde{f})(x,t)=:F(x,t),  \label{kk} \\
    & \hat v(x,t) = 0, \quad (x,t) \in \partial\Omega \times [0,T], \quad \hat v(x,0) = 0, \quad x \in \Omega, \nonumber
\end{align}
which would recover the original model \eqref{yq4} by applying the fractional integral operator $I_t^{1-\alpha}$ on both sides of \eqref{kk}.
% For this model, to ensure the applicability of variational techniques in the subsequent numerical analysis, we impose several necessary conditions on the forcing term $\tilde{f}$, following \cite{LiuM}: Assume that $\tilde{f} \in \dot H^{2m}$ for each $t \in [0,T]$, that $\Delta^j \tilde{f} = 0$ on $\partial\Omega \times [0,T]$ for $0 \le j \le m-1$, and that $\Delta^m \tilde{f} \in L^2$ for each $t \in [0,T]$.
% Consequently, in $d$-dimensional space ($1 \le d \le 3$), a possible choice of $f$ and $u_0$ is
% \begin{align*}
%    u_0 = \prod_{l=1}^{d} \sin\left( \frac{x_l}{2} \right), \quad  f = \prod_{l=1}^{d} \sin\left( x_l \right), \quad  x\in \Omega = (0,2\pi)\times(0,2\pi).
% \end{align*}
% However, the above assumptions impose restrictive regularity and compatibility requirements on the data $f$ and the initial value $u_0$.

\begin{remark}

If the source term $f(x,t)$ is structurally complex in time, the right‑hand side integrals in \eqref{yq4} and \eqref{kk} could not be directly evaluated. In this case, a higher‑order interpolation‑based quadrature (e.g.,  the L2 method), can be applied to approximate these terms such that   the resulting truncation error does not  degrade the overall accuracy.
\end{remark}

\section{Stability of the time-semidiscrete scheme}\label{sec3}
In this section, we construct and analyze the time semi-discrete scheme for problem \eqref{kk}.

\subsection{Construction of the time-semidiscrete scheme}

The time semi-discrete scheme is constructed in the following two steps.

 \textbf{Step 1: L2 method for Caputo  derivative.}
%\subsection{L2 method for Caputo fractional derivative}
%Here, we focus on constructing an L2-type approximation for the Caputo fractional derivative $\partial_{t}^{\gamma}w$.
On a general nonuniform time mesh $0=t_0<t_1<\cdots<t_k<\cdots<t_N$ with the step size $\tau_k=t_k-t_{k-1}$, $k\geq 1$, step size ratio $\rho_k=\tau_k/\tau_{k-1}$, $k\geq 2$, and $\rho_{\text{max}}:=\max \limits_{2\leq k\leq N}\rho_k$, we define
\begin{equation*}
D_{k}^{\gamma}w \approx  \delta_t^{\gamma} w (t_k) := \partial_{t}^{\gamma}(\Lambda^{k}w)(t_{k}),\quad \Lambda^{k}=
\left\{\begin{array}{lll}
\Lambda_{1,1} \quad  &\text{on}~(0,t_{1}) \quad &\text{for}~k=1,\\
\Lambda_{2,j} & \text{on}~(t_{j-1},t_{j}) \quad &\text{for}~1\le j< k,\\
\Lambda_{2,k-1}& \text{on}~(t_{k-1},t_{k}) \quad &\text{for}~j=k>1,
\end{array}\right.
\end{equation*}
in which,
\begin{equation*}
\Lambda_{1,j}w(t)=\frac{t-t_{j}}{t_{j-1}-t_{j}}w^{j-1}+\frac{t-t_{j-1}}{t_{j}-t_{j-1}}w^{j},
\end{equation*}
and
\begin{equation*}
\Lambda_{2,j}w(t)=\frac{(t-t_{j})(t-t_{j+1})}{(t_{j-1}-t_{j})(t_{j-1}-t_{j+1})}w^{j-1}+\frac{(t-t_{j-1})(t-t_{j+1})}{(t_{j}-t_{j-1})(t_{j}-t_{j+1})}w^{j}+\frac{(t-t_{j-1})(t-t_{j})}{(t_{j+1}-t_{j-1})(t_{j+1}-t_{j})}w^{j+1}.
\end{equation*}
Here $\Lambda_{1,j}$ and $\Lambda_{2,j}$ represent the standard linear and quadratic Lagrange interpolation operators associated with their respective interpolation nodes $\Lambda_{1,j}:\{t_{j-1},t_{j}\}$ and $\Lambda_{2,j}:\{t_{j-1},t_{j},t_{j+1}\}$.
We follow \cite{Quan} to discretize  the fractional   derivative $D_{k}^{\gamma}w$  as follows

\begin{equation}\label{e2.6}
\begin{aligned}
&D_{1}^{\gamma}w =\frac{1}{\Gamma(2-\gamma)\tau_{1}^{\gamma}}\delta_{1}w,  \\
&D_{k}^{\gamma}w =\frac{1}{\Gamma(1-\gamma)}\Bigg((c_{k}^{(k)}+c_{k-1}^{(k)})\delta_{k}w-a_{k}^{(k)}\delta_{k-1}w-a_{1}^{(k)}\delta_{1}w+\sum_{j=2}^{k-1}d_{j}^{(k)}\delta_{j}w\Bigg),\quad 2 \le k \le N,
\end{aligned}
\end{equation}
where  $\delta_{j}w :=w^{j}-w^{j-1}$, the coefficients $\{a_j^{(k)}\}_{j=1}^{k-1}$, $\{c_j^{(k)}\}_{j=1}^{k-1}$, $a_k^{(k)}$ and $c_k^{(k)}$ are defined as follows
\begin{equation*}
\begin{aligned}
a_{j}^{(k)}=& \frac{\tau_{j+1}}{\left(1-\gamma\right)\tau_{j}\left(\tau_{j}+\tau_{j+1}\right)}\left(t_{k}-t_{j}\right)^{1-\gamma}-\frac{2\tau_{j}+\tau_{j+1}}{\left(1-\gamma\right)\tau_{j}\left(\tau_{j}+\tau_{j+1}\right)}\left(t_{k}-t_{j-1}\right)^{1-\gamma}\\
& +\frac{2}{\left(2-\gamma\right)\left(1-\gamma\right)\tau_{j}\left(\tau_{j}+\tau_{j+1}\right)}\left[\left(t_{k}-t_{j-1}\right)^{2-\gamma}-\left(t_{k}-t_{j}\right)^{2-\gamma}\right],\\
c_{j}^{(k)}=& \frac{1}{\left(1-\gamma\right)\tau_{j+1}\left(\tau_{j}+\tau_{j+1}\right)}\bigg[-\tau_{j}\left(\left(t_{k}-t_{j-1}\right)^{1-\gamma}+\left(t_{k}-t_{j}\right)^{1-\gamma}\right)\\
&\qquad  \qquad  \qquad \qquad \qquad \qquad  +2\left(2-\gamma\right)^{-1}\left(\left(t_{k}-t_{j-1}\right)^{2-\gamma}-\left(t_{k}-t_{j}\right)^{2-\gamma}\right)\bigg],\\
a_{k}^{(k)}=& \frac{\gamma\tau_{k}^{2}}{\left(2-\gamma\right)\left(1-\gamma\right)\tau_{k-1}\left(\tau_{k-1}+\tau_{k}\right)\tau_{k}^{\gamma}},
c_{k}^{(k)}= \frac{1}{\left(1-\gamma\right)\tau_{k}^{\gamma}}+\frac{\gamma\tau_{k}}{\left(2-\gamma\right)\left(1-\gamma\right)\left(\tau_{k-1}+\tau_{k}\right)\tau_{k}^{\gamma}}
\end{aligned}
\end{equation*}
and $d_{j}^{(k)}:=c_{j-1}^{(k)}-a_{j}^{(k)}$. We then refer to the following lemma for future use.

\begin{lemma}  \cite[Corollary 3.3]{Quan} \label{psd}
Let $\rho_L \approx 0.457333$ and $\rho_R \approx 3.561553$.
If $\rho_k \in [\rho_L, \rho_R]$ for all $k \ge 2$, then the following inequality holds:
\begin{equation*}
B_n(w,w)
:= \sum_{k=1}^{n} \langle D_k^{\gamma} w,\, \delta_k w \rangle
\geq Q \sum_{k=1}^{n} \tau_k^{-\gamma} \|\delta_k w\|^2 \geq 0, \quad n \ge 2,
\end{equation*}
where $Q>0$ is a constant depending only on $\gamma$.
\end{lemma}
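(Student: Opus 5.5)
This lemma is quoted from \cite[Corollary 3.3]{Quan}, so within the paper it can simply be invoked. If one wants to re-derive it, the plan is to prove positive definiteness of the quadratic form generated by the L2 coefficients in \eqref{e2.6}. Write $x_k:=\delta_k w$ and expand $B_n(w,w)$ as a quadratic form $\sum_{k}\sum_{j\le k} b^{(k)}_j\langle x_j,x_k\rangle$. Here $b^{(k)}_k=(c_k^{(k)}+c_{k-1}^{(k)})/\Gamma(1-\gamma)$, $b^{(k)}_{k-1}$ contains $-a_k^{(k)}$ together with the $d$-coefficient, and the remaining $b_j^{(k)}$ are $d_j^{(k)}$ or $-a_1^{(k)}$. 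Since the form is real and only the symmetric part matters, it suffices to show that the symmetrized matrix minus $Q\,\mathrm{diag}(\tau_k^{-\gamma})$ is positive semidefinite.

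\textbf{Step 1: split off the L1 part.} The quadratic interpolant on $(t_{j-1},t_j)$ equals the linear interpolant plus a correction proportional to $(t-t_{j-1})(t-t_j)$ times a second divided difference. Accordingly I would write $B_n=B_n^{L1}+R_n$. The L1 kernel coefficients are positive, decreasing in $j$ and convex, so $B_n^{L1}(w,w)\ge \tfrac12\sum_k b^{L1,(k)}_k\|x_k\|^2$, which is of size $\sum_k\tau_k^{-\gamma}\|x_k\|^2$. This bound follows from the standard discrete kernel argument, e.g.\ Liao--McLean--Zhang or a Karlsson-type criterion.

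\textbf{Step 2: bound the correction.} The correction coefficients involve $x_{j+1}/\tau_{j+1}-x_j/\tau_j$ weighted by integrals of $(t_k-s)^{-\gamma}$ against the quadratic bubble. Integrating by parts shows these weights are bounded by $Q\,\tau_j^{2}\,(t_k-t_j)^{-1-\gamma}$ off the diagonal, and by explicit terms on the diagonal. Applying Cauchy--Schwarz with weights $\tau_k^{-\gamma}$ then gives $|R_n|\le \theta(\rho_k,\gamma)\sum_k\tau_k^{-\gamma}\|x_k\|^2$. The local step-ratio constraint enters exactly in the diagonal and first off-diagonal terms, namely through $a_k^{(k)}$ and $c_{k-1}^{(k)}$.

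\textbf{Step 3 (main obstacle).} The hard part is to show $\theta<\tfrac12\cdot$(L1 constant) uniformly in $\gamma$ whenever $\rho_k\in[\rho_L,\rho_R]$. The thresholds $\rho_L\approx0.457$ and $\rho_R\approx 3.56$ come from the roots of explicit polynomial inequalities in $\rho$; for instance, $\rho_R=(3+\sqrt{17})/2$ arises from the $2\times 2$ leading-block determinant condition. My first attempt would be to examine the worst case $\gamma\to 0$ or $\gamma\to 1$ on the $2\times 2$ (and $3\times 3$) principal blocks, verify the determinant conditions in $\rho$, and control the far-field coefficients by their monotone decay. I expect the tail estimate to be routine; the near-diagonal algebra, checked uniformly in $\gamma$, is the genuinely delicate part, which is why the paper cites \cite{Quan} for it.
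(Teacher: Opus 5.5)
Invoking \cite[Corollary 3.3]{Quan} is exactly what the paper does, since it gives no argument beyond the citation, so your first sentence already matches it. The optional re-derivation, however, would not close. The problem is in the design of Steps 2--3, not in the algebra you defer.

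\textbf{The correction is not a small perturbation.} Once $\gamma$ is near $1$, the L2 correction is of the same size as the L1 form. As $\gamma\to1$, $\beta_{1-\gamma}$ concentrates at the origin. For $k\ge2$ this gives
$D_k^{\gamma}w\to(\Lambda_{2,k-1}w)'(t_k)=\tau_k^{-1}\bigl(\frac{1+2\rho_k}{1+\rho_k}\delta_kw-\frac{\rho_k^2}{1+\rho_k}\delta_{k-1}w\bigr)$,
which is the variable-step BDF2 quotient, while the L1 part tends to backward Euler. In your notation this means $B_n^{L1}\to\sum_k\tau_k^{-1}\|x_k\|^2$ and
$R_n\to\sum_{k\ge2}\tau_k^{-1}\bigl(\frac{\rho_k}{1+\rho_k}\|x_k\|^2-\frac{\rho_k^2}{1+\rho_k}\langle x_{k-1},x_k\rangle\bigr)$.

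Two test cases show how large $R_n$ is:
\begin{itemize}
\item On a uniform mesh with $x_k=(-1)^kx$, you get $|R_n|/\sum_k\tau_k^{-1}\|x_k\|^2=(n-1)/n$.
\item On a geometric mesh with ratio $\rho$ and alternating $x_k/\sqrt{\tau_k}$, the ratio tends to $(\rho+\rho^{3/2})/(1+\rho)$, which is about $2.25$ at $\rho_R$.
\end{itemize}
So no bound $|R_n|\le\theta\sum_k\tau_k^{-\gamma}\|x_k\|^2$ with $\theta$ below the $\frac12$ supplied by your Step~1 can hold. By continuity in $\gamma$ (take $n=3$, uniform mesh, limiting ratio $2/3$), Step~3's target inequality is false for every fixed $\gamma$ sufficiently close to $1$, even on uniform meshes. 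This is not merely a uniformity issue.

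\textbf{What actually gives $\rho_R$.} The diagonal part $\frac{\rho_k}{1+\rho_k}\tau_k^{-1}\|x_k\|^2$ of the correction is positive and indispensable; taking absolute values throws it away. In the BDF2 limit, the natural telescoping argument with the full diagonal $\frac{1+2\rho_k}{1+\rho_k}$ needs
$\frac{2+4\rho_k-\rho_k^2}{1+\rho_k}\ge\frac{\rho_{k+1}}{1+\rho_{k+1}}$.
This holds for all ratios in $(0,R]$ exactly when $R^2-3R-2\le0$, i.e.\ $R\le(3+\sqrt{17})/2=\rho_R$. With only the L1 diagonal, the same argument stops at $R=2$.

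This also corrects your attribution of $\rho_R$: it is not a leading $2\times2$ determinant condition. In the BDF2 limit the $n=2$ form stays definite until $\rho^3=4(1+\rho)(1+2\rho)$, i.e.\ $\rho\approx9.3$. A workable re-derivation has to analyse the full signed L2 matrix, rather than L1 plus an absolutely bounded remainder.

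Step~1 itself is sound. The bound $B_n^{L1}\ge\frac12\sum_k b_k^{L1,(k)}\|x_k\|^2$ holds on arbitrary meshes, for example via complete monotonicity of $\beta_{1-\gamma}$. Note, though, that the L1 coefficients increase toward the diagonal, and ``convex'' has no clean meaning on nonuniform meshes.
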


 \textbf{Step 2: BDF3 method for the time derivative.} We follow \cite{Calvo,LiZ} to introduce a variable-step BDF3 method to approximate the time derivative $\partial_t w$. We introduce the following auxiliary functions
\begin{align*}
  \sigma_0(y,z) & = \frac{2y+1}{y+1} + \frac{yz}{yz+z+1}, \\
  \sigma_1(y,z) & = -\frac{y}{y+1} - \frac{yz}{yz+z+1} - \frac{yz^2}{yz+z+1}\cdot \frac{y+1}{z+1}, \\
  \sigma_2(y,z) & = \frac{yz^2}{yz+z+1}\cdot \frac{y+1}{z+1},
\end{align*}
which satisfy $\sum_{j=0}^{2}\sigma_j(y,z)=1$. Then we denote some helpful notations
\begin{align*}
   \delta_t w^k := \delta_k w /\tau_k, \quad k \geq 1, \quad  \sigma_j^{(k)} := \sigma_j(\rho_k,\rho_{k-1}),  \quad k \geq 3, \quad j=0,1,2.
\end{align*}
For $k \geq 3$, the variable-step BDF3 scheme is given as follows
\begin{align}\label{qwl002}
   \partial_t w (t_k)  \approx \bar{\delta}_t w^k := \sum_{j=0}^{2}  \sigma_j^{(k)} \delta_t w^{k-j}.
\end{align}
For $k=1,2$, we apply the backward Euler method to approximate $\partial_t w$ at $t=t_k$.

\begin{lemma}\cite[Theorem 2.1]{Qi}\label{BDF3}
   If $\rho_k\in [0.5, 1.7319]$ and fixed $\tilde{\beta}=\frac{173}{200}$, then we have
   \begin{align*}
      2\tau_k \, \delta_t w^k \, \bar{\delta}_t w^k = \mathcal{D}_{k}(q_1,q_2) - \mathcal{D}_{k-1}(q_2,q_3) + \mathcal{S}_k(q_1,q_2,q_3),
   \end{align*}
where $q_{j+1}:=\delta_t w^{k-j}$ for $j=0,1,2$, and it holds that
\begin{align*}
  \mathcal{D}_{k}(a,b) & = d_{1,k}^{*} \tau_k a^2 + d_{2,k}^{*}( \tilde{\beta}\sqrt{\tau_k}a - \sqrt{\tau_{k-1}}b )^2,  \\
  \mathcal{S}_k(a,b,c) & = s_{1,k}^{*} \tau_k a^2 + s_{2,k}^{*}( \tilde{\beta}\sqrt{\tau_k}a - \sqrt{\tau_{k-1}}b )^2 + s_{3,k}^{*}( \sqrt{\tau_k}a - \tilde{\beta}\sqrt{\tau_{k-1}}b + \sqrt{\tau_{k-2}}c )^2. \nonumber
\end{align*}
Here $d_{i,k}^{*}>0$ ($i=1,2$), $s_{j,k}^{*}>0$ ($j=1,2,3$), and $\mathcal{S}_k(a,b,c)\geq s_* \tau_k a^2$ holds with $s_* > 2\times 10^{-4}$.
\end{lemma}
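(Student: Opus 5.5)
The plan is to prove the identity by brute-force matching of quadratic forms, and then to verify the sign conditions on the ratio box. I would introduce the scaled variables
\[
X:=\sqrt{\tau_k}\,q_1,\qquad Y:=\sqrt{\tau_{k-1}}\,q_2,\qquad Z:=\sqrt{\tau_{k-2}}\,q_3 .
\]
Since $\tau_k/\sqrt{\tau_k\tau_{k-1}}=\sqrt{\rho_k}$ and $\tau_k/\sqrt{\tau_k\tau_{k-2}}=\sqrt{\rho_k\rho_{k-1}}$, the left-hand side becomes
\[
2\tau_k\,\delta_t w^k\,\bar\delta_t w^k
=2\sigma_0^{(k)}X^2+2\sigma_1^{(k)}\sqrt{\rho_k}\,XY+2\sigma_2^{(k)}\sqrt{\rho_k\rho_{k-1}}\,XZ .
\]
This is a quadratic form in $(X,Y,Z)$ whose coefficients depend only on $(\rho_k,\rho_{k-1})$.

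On the right-hand side, $\mathcal D_k(q_1,q_2)$ is a form in $(X,Y)$ and $\mathcal D_{k-1}(q_2,q_3)$ is a form in $(Y,Z)$, because $\sqrt{\tau_{k-1}}q_2=Y$ and $\sqrt{\tau_{k-2}}q_3=Z$. The term $\mathcal S_k$ is a form in $(X,Y,Z)$. Expanding everything and equating the six monomial coefficients ($X^2,Y^2,Z^2,XY,XZ,YZ$) gives a linear system for the unknowns $d^*_{1,k},d^*_{2,k},s^*_{1,k},s^*_{2,k},s^*_{3,k}$. In this system $d^*_{1,k-1},d^*_{2,k-1}$ enter through the $Y^2,Z^2,YZ$ equations.

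I would take the $d^*_{i,k}$ to depend on $(\rho_k,\rho_{k-1})$ only, in a telescoping-compatible way, and then solve the equations in the following order.
\begin{itemize}
\item The $XZ$ equation determines $s^*_{3,k}$ directly, since only $\mathcal S_k$ contains $XZ$. Its value is $s^*_{3,k}=\sigma_2^{(k)}\sqrt{\rho_k\rho_{k-1}}$, and this is positive because $\sigma_2>0$.
\item The $Z^2$ and $YZ$ equations then link $d^*_{2,k-1}$ to $s^*_{3,k}$ and to the fixed constant $\tilde\beta$. This explains the role of $\tilde\beta=173/200$: it is chosen so that these two equations are consistent.
\item The $Y^2$ and $XY$ equations determine $s^*_{2,k}$ and the relation between $d^*_{2,k}$ and $d^*_{1,k-1}$.
\item The $X^2$ equation finally yields $s^*_{1,k}$, as $2\sigma_0^{(k)}$ minus the contributions of $d^*_{1,k}$, $d^*_{2,k}\tilde\beta^2$, $s^*_{2,k}\tilde\beta^2$ and $s^*_{3,k}$.
\end{itemize}
This produces explicit rational-radical expressions in $\sqrt{\rho_k}$ and $\sqrt{\rho_{k-1}}$, and the identity then holds by construction.

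The main obstacle is the positivity step. One must show that every $d^*_{i,k}$ and $s^*_{j,k}$ is positive for all $(\rho_k,\rho_{k-1})\in[0.5,1.7319]^2$, and in particular that $\mathcal S_k\ge s^*_{1,k}\tau_kq_1^2$ with $s^*_{1,k}$ bounded below by some $s_*>2\times10^{-4}$. I would treat each coefficient as a continuous function on this compact box. First I would check positivity at the corners and along the edges, then rule out interior sign changes by monotonicity in each variable separately. If the algebra is too heavy, I would fall back on a rigorous interval-arithmetic or grid-plus-Lipschitz verification. The upper endpoint $1.7319$ is presumably exactly where $\min s^*_{1,k}$ approaches the stated threshold, so the bound is expected to be tight near $\rho_k=1.7319$.
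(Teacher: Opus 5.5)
The paper gives no proof of this lemma; it is quoted from \cite[Theorem 2.1]{Qi}. Your scaling, your left-hand side $2\sigma_0^{(k)}X^2+2\sigma_1^{(k)}\sqrt{\rho_k}XY+2\sigma_2^{(k)}\sqrt{\rho_k\rho_{k-1}}XZ$, and $s^*_{3,k}=\sigma_2^{(k)}\sqrt{\rho_k\rho_{k-1}}$ are all correct. The gap is the ansatz that $d^*_{i,k}$ depends only on $(\rho_k,\rho_{k-1})$: it is incompatible with the identity, so the linear system you set up is inconsistent.

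Matching the $XZ$, $Z^2$ (equivalently $YZ$), $XY$ and $Y^2$ coefficients gives
\[
d^*_{2,k-1}=s^*_{3,k}=\sigma_2^{(k)}\sqrt{\rho_k\rho_{k-1}},\qquad
d^*_{1,k-1}=d^*_{2,k}+s^*_{2,k}=-\sqrt{\rho_k}\Big(\frac{\sigma_1^{(k)}}{\tilde\beta}+\sigma_2^{(k)}\sqrt{\rho_{k-1}}\Big).
\]
So the $k$-th identity fixes the \emph{old} coefficients $d^*_{i,k-1}$ as functions of $\rho_k$ and $\rho_{k-1}$. Under your ansatz, $d^*_{2,k-1}$ would depend on $(\rho_{k-1},\rho_{k-2})$. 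It then could not equal $\sigma_2(\rho_k,\rho_{k-1})\sqrt{\rho_k\rho_{k-1}}$, which genuinely depends on $\rho_k$.

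The system has to be solved backward. Telescoping forces $d^*_{2,k}=\sigma_2^{(k+1)}\sqrt{\rho_{k+1}\rho_k}$ and $d^*_{1,k}=-\sqrt{\rho_{k+1}}\big(\sigma_1^{(k+1)}/\tilde\beta+\sigma_2^{(k+1)}\sqrt{\rho_k}\big)$, after which
\[
s^*_{2,k}=d^*_{1,k-1}-d^*_{2,k},\qquad s^*_{1,k}=2\sigma_0^{(k)}-d^*_{1,k}-\tilde\beta^2 d^*_{1,k-1}-s^*_{3,k}.
\]
This has several consequences:
\begin{itemize}
\item The $Y^2$ and $XY$ equations do not determine $s^*_{2,k}$; they determine $d^*_{1,k-1}$, and $s^*_{2,k}$ needs $d^*_{2,k}$ from step $k+1$.
\item Nothing is left to choose except $\tilde\beta$.
\item The positivity verification lives on the cube $[0.5,1.7319]^3$ in $(\rho_{k+1},\rho_k,\rho_{k-1})$, not on a square.
\item At the last step you need a fictitious admissible $\rho_{n+1}$. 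This is harmless, since later only $\mathcal{D}_n\ge 0$ is used.
\end{itemize}

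Your reading of $\tilde\beta$ is also wrong. The $Z^2$ and $YZ$ equations coincide for every $\tilde\beta$, so no consistency condition selects it. It is a tuning parameter. $s^*_{2,k}$ decreases in $\tilde\beta$, because it contains $-\sigma_1^{(k)}\sqrt{\rho_k}/\tilde\beta$ with $-\sigma_1^{(k)}>0$. Along the diagonal $\rho_{k+1}=\rho_k=\rho_{k-1}$, by contrast, $s^*_{1,k}$ increases for $\tilde\beta\in(0,1)$.

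With $\tilde\beta=173/200$ and all three ratios equal to $1.7319$, one finds $s^*_{1,k}\approx 2.4\times10^{-4}$ and $s^*_{2,k}\approx 1.8\times10^{-4}$. So the endpoint $1.7319$ is where \emph{both} coefficients become critical, not only $s^*_{1,k}$. Any rigorous positivity argument, whether by monotonicity or interval arithmetic, must control $s^*_{2,k}$ as carefully as $s^*_{1,k}$, and on the three-dimensional box.
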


%\subsection{Establishment of time-discrete scheme}

    For $k \ge 1$, we  consider the model \eqref{kk} and the relation \eqref{yq5} at $t=t_k$ to obtain
\begin{align}
   & \partial_t \hat v^k - (\partial_t^{1-\alpha} \Delta \hat v)(x,t_k) =  F^k , \label{yq0b}  \\
   & \hat u^k = u_0 + \hat v^k + \sum_{\ell=0}^{m-1} I_{t}^{(\ell+1)\alpha}\Pi_L(\Delta^{\ell} \tilde{f})(x, t_k),  \label{yq00}
\end{align}
where $\hat v^k:=\hat v(x,t_k)$, $\hat u^k:=\hat u(x,t_k)$, and $F^k:=F(x,t_k)$.
We then approximate \eqref{yq0b} in the following two cases.

\textbf{Case I: \bm{$k = 1, 2$}.} We use the backward-Euler method and the nonuniform L2 method \eqref{e2.6} to obtain
\begin{align}\label{yq02}
    \delta_t \hat v^k - D_{k}^{1-\alpha}(\Delta \hat v) = F^k + (\mathcal{R}_1)^k +  (\mathcal{R}_2)^k,
\end{align}
in which,
\begin{align}
(\mathcal{R}_1)^k  := \delta_t \hat v^k - \partial_t \hat v^k, \label{eq:27} \quad
%= \frac{1}{\tau_k}\int_{t_{k-1}}^{t_k}(t_{k-1}-s)\partial_s^2(\Delta \hat v)(x,s)ds, \quad k=1,2, \label{eq:27} \\
(\mathcal{R}_2)^k  := (\partial_t^{1-\alpha} \Delta \hat v)(x,t_k) - D_{k}^{1-\alpha}(\Delta \hat v).
\end{align}

\textbf{Case II: \bm{$k \ge 3$}.} We use the variable-step BDF3  scheme \eqref{qwl002} and \eqref{e2.6}  to get
\begin{align}\label{yq01}
   \bar{\delta}_t \hat v^k - D_{k}^{1-\alpha}(\Delta \hat v)  =  F^k + (\mathcal{R}_1)^k + (\mathcal{R}_2)^k
% where
% \begin{align}
%   (\mathcal{R}_1)^k & = \bar{\delta}_t \hat v^k - \partial_t \hat v^k, \quad k \geq 3, \label{wl01} \\
%   (\mathcal{R}_2)^k & = (\partial_t^{1-\alpha} \Delta \hat v)(x,t_k) - D_{k}^{1-\alpha}(\Delta \hat v),  \quad k \geq 3.  \label{wl02}
\end{align}
with $(\mathcal{R}_1)^k=\bar{\delta}_t \hat v^k - \partial_t \hat v^k$ for $k\geq 3$ and $(\mathcal{R}_2)^k$ defined in \eqref{eq:27}.
For $k \ge 1$, we omit the truncation errors $(\mathcal{R}_1)^k$ and $(\mathcal{R}_2)^k$, and replace $\hat v^k, \hat u^k$ with their numerical approximations $V^k, U^k$ to get the following time-discrete scheme
\begin{align}
   \delta_t V^k  - D_{k}^{1-\alpha}(\Delta V) & = F^k, \quad k=1,2, \quad V^0  = \hat v(x,0) = 0,  \label{xc01} \\
  \bar{\delta}_t V^k  - D_{k}^{1-\alpha}(\Delta V)  & =  F^k, \quad k \geq 3,    \label{xc02} \\
  U^k = V^k + u_0  + & \sum_{\ell=0}^{m-1} I_{t}^{(\ell+1)\alpha} \Pi_L (\Delta^{\ell} \tilde{f})(x, t_k), \quad k \geq 1.    \label{xc002}
\end{align}

\subsection{Stability analysis}
We present the following theorem to prove the stability of the time-discrete scheme \eqref{xc01}--\eqref{xc002}.

\begin{theorem}\label{thm3.3}
If time-step ratio $\rho_k \in [0.5,1.7319]$, then the solutions of \eqref{xc01}--\eqref{xc002} satisfy
\begin{align}
    \|V^n\| & \leq Q \Big( \sum_{k=1}^{n}\tau_k \|F^k\|^2 \Big)^{1/2}, \quad n\geq 1, \label{yq07}\\
    \|U^n\| & \leq \|u_0\| +   Q\Big( \sum_{k=1}^{n}\tau_k \|F^k\|^2 \Big)^{1/2} + \Big\| \sum_{\ell=0}^{m-1} I_{t}^{(\ell+1)\alpha}\Pi_L (\Delta^{\ell} \tilde{f})(\cdot, t_n) \Big\|,  \quad n\geq 1.\nonumber
\end{align}
\end{theorem}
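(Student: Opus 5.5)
The plan is an energy argument: test the scheme against $\tau_k\delta_t V^k=\delta_k V$ and sum over $k$. The two tools are Lemma~\ref{psd}, which controls the diffusion term, and Lemma~\ref{BDF3}, which controls the BDF3 term. The hypothesis $\rho_k\in[0.5,1.7319]$ lies inside $[\rho_L,\rho_R]$, so both lemmas apply. The bound for $U^n$ then follows from \eqref{xc002} and the triangle inequality, so all the work is in \eqref{yq07}.

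\textbf{Step 1 (testing).} For $k=1,2$, take the $L^2$ inner product of \eqref{xc01} with $2\tau_k\delta_t V^k$. For $k\ge3$, take the inner product of \eqref{xc02} with $2\tau_k\delta_t V^k$. The diffusion term becomes
\[
-2\langle D_k^{1-\alpha}(\Delta V),\delta_k V\rangle=2\langle D_k^{1-\alpha}(\nabla V),\delta_k\nabla V\rangle,
\]
after integrating by parts with $V^k\in H_0^1$. Summing over $k=1,\dots,n$ gives $2B_n(\nabla V,\nabla V)\ge0$ by Lemma~\ref{psd} with $\gamma=1-\alpha$; for $n=1$ it is the single positive term $D_1$. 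This term is nonnegative and is simply dropped.

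\textbf{Step 2 (BDF3 telescoping).} For $k=1,2$ the time terms are $2\tau_k\|\delta_tV^k\|^2$. For $k\ge3$, Lemma~\ref{BDF3} applied pointwise in $x$ and integrated gives
\[
2\tau_k\langle\delta_tV^k,\bar\delta_tV^k\rangle=\mathcal D_k-\mathcal D_{k-1}+\mathcal S_k,\qquad \mathcal S_k\ge s_*\tau_k\|\delta_tV^k\|^2 .
\]
Summing from $k=3$ to $n$ telescopes. We keep $\mathcal D_n\ge0$ and are left with the boundary term $\mathcal D_2(\delta_tV^2,\delta_tV^1)\le Q(\tau_2\|\delta_tV^2\|^2+\tau_1\|\delta_tV^1\|^2)$. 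The coefficients $d_{i,2}^*$ are bounded because the ratios are bounded. This leftover boundary term is the main obstacle. It is absorbed using the fact that the first two steps were tested with the backward Euler terms, which produce $2\tau_k\|\delta_tV^k\|^2$ for $k=1,2$. To make the absorption work, I would first multiply the equations for $k=1,2$ by a sufficiently large constant $M$, and then use Young's inequality on their right-hand sides.

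A cleaner alternative is to bound $\tau_k\|\delta_tV^k\|^2$ for $k=1,2$ directly. Testing the first two equations separately, and using the positivity of $B_1$ and $B_2$, gives
\[
\sum_{k\le2}\tau_k\|\delta_tV^k\|^2\le Q\sum_{k\le2}\tau_k\|F^k\|^2 .
\]
Either way we obtain
\[
s_*\sum_{k=1}^n\tau_k\|\delta_tV^k\|^2\le Q\sum_{k=1}^n\tau_k\|F^k\|^2+2\sum_{k=1}^n\tau_k\langle F^k,\delta_tV^k\rangle .
\]
Young's inequality on the last sum then yields $\sum_k\tau_k\|\delta_tV^k\|^2\le Q\sum_k\tau_k\|F^k\|^2$.

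\textbf{Step 3 (conclusion).} Since $V^0=0$, we have $V^n=\sum_{k=1}^n\tau_k\delta_tV^k$. Cauchy--Schwarz gives
\[
\|V^n\|\le t_n^{1/2}\Big(\sum_k\tau_k\|\delta_tV^k\|^2\Big)^{1/2}\le Q\Big(\sum_k\tau_k\|F^k\|^2\Big)^{1/2},
\]
with $t_n\le T$ absorbed into $Q$. This is \eqref{yq07}. The $U^n$ estimate follows from \eqref{xc002}.
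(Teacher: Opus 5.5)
Your proposal is correct and follows the paper's proof essentially step for step. You test with $2\delta_k V$, discard the nonnegative L2 contribution via Lemma~\ref{psd}, telescope the BDF3 terms via Lemma~\ref{BDF3} while keeping $\mathcal{D}_n\ge 0$, and finish with Young's inequality and the discrete Cauchy--Schwarz inequality using $V^0=0$. Your ``cleaner alternative'' for the leftover boundary term, namely $\mathcal{D}_2(\delta_tV^2,\delta_tV^1)\le Q\sum_{k\le 2}\tau_k\|\delta_tV^k\|^2\le Q\sum_{k\le 2}\tau_k\|F^k\|^2$ obtained by testing only the two backward Euler steps, is exactly how the paper handles it.
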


\begin{proof} We take the inner product of \eqref{xc01}--\eqref{xc02} with $2\delta_k V$, and sum the resulting equation from $k=1$ to $n$ to get
\begin{align*}
   \sum_{k=1}^{2} (\delta_t V^k, 2\delta_k V)  +  \sum_{k=3}^{n} (\bar{\delta}_t V^k, 2\delta_k V)  + 2 \sum_{k=1}^{n}(D_{k}^{1-\alpha}(\nabla V), \delta_k (\nabla V) )  & =  2\sum_{k=1}^{n}\tau_k(F^k, \delta_t V^k).
\end{align*}
We then follow Lemma \ref{psd} and Lemma \ref{BDF3} to get
\begin{align*}
    \mathcal{D}_n(\delta_t V^n,\delta_t V^{n-1})  + s_* \sum_{k=3}^{n} \tau_k \|\delta_t V^k\|^2 + 2 \sum_{k=1}^{2} \tau_k \|\delta_t V^k\|^2  & \leq \mathcal{D}_2(\delta_t V^2,\delta_t V^{1}) +  2\sum_{k=1}^{n}\tau_k(F^k, \delta_t V^k),
\end{align*}
which, combined with the fact that
\begin{align*}
  \mathcal{D}_n(\delta_t V^n,\delta_t V^{n-1}) & \geq d_{1,n}^{*} \tau_n \|\delta_t V^n\|^2 \geq 0,   \\
  2\sum_{k=1}^{n}\tau_k(F^k, \delta_t V^k) & \leq \frac{2}{c_*}\sum_{k=1}^{n}\tau_k \|F^k\|^2 + \frac{c_*}{2}\sum_{k=1}^{n} \tau_k \|\delta_t V^k\|^2, \quad  c_*:= \min\{s_*,2\},
\end{align*}
gives
\begin{align}
  \frac{c_*}{2}\sum_{k=1}^{n} \tau_k \|\delta_t V^k\|^2 \leq  \mathcal{D}_2(\delta_t V^2,\delta_t V^{1}) + \frac{2}{c_*}\sum_{k=1}^{n}\tau_k \|F^k\|^2. \label{xc03}
\end{align}
We combine the discrete Cauchy inequality to  obtain
\begin{align*}
    \|V^n - V^0\|^2 = \Big\|\sum_{k=1}^{n} \tau_k^{1/2} (\tau_k^{1/2} \delta_t V^k) \Big\|^2 \leq  \Big(\sum_{k=1}^{n}\tau_k \Big) \sum_{k=1}^{n} \tau_k \|\delta_t V^k\|^2,
\end{align*}
which accordingly gives
\begin{align*}
   \|V^n\| - \|V^0\| \leq \|V^n - V^0\| \leq \sqrt{t_n} \Big(\sum_{k=1}^{n} \tau_k \|\delta_t V^k\|^2\Big)^{1/2}.
\end{align*}
We then combine \eqref{xc03} and $V^0=0$ to obtain
\begin{align}\label{xc04}
    \|V^n\| \leq  \frac{\sqrt{2t_n}}{\sqrt{c_*}} \Big(  \mathcal{D}_2(\delta_t V^2,\delta_t V^{1}) + \frac{2}{c_*}\sum_{k=1}^{n}\tau_k \|F^k\|^2 \Big)^{1/2}.
\end{align}
Next, we discuss the estimates of $\delta_t V^2$ and $\delta_t V^1$. From \eqref{xc01}, we have
\begin{align*}
   \sum_{k=1}^{2} (\delta_t V^k, \delta_k V) + \sum_{k=1}^{2}(D_{k}^{1-\alpha}(\nabla V), \delta_k (\nabla V) )  & =  \sum_{k=1}^{2}\tau_k(F^k, \delta_t V^k).
\end{align*}
We use Lemmas \ref{psd}--\ref{BDF3} to reformulate  the above inequality to obtain
\begin{align*}
  \mathcal{D}_2(\delta_t V^2,\delta_t V^{1}) \leq Q ( \tau_2 \|\delta_t V^2\|^2 + \tau_1 \|\delta_t V^1\|^2 )  =Q \sum_{k=1}^{2} \tau_k \|\delta_t V^k\|^2  & \leq Q  \sum_{k=1}^{2}\tau_k \|F^k\|^2.
\end{align*}
We incorporate this to rewrite \eqref{xc04} to arrive at \eqref{yq07}.
We invoke  \eqref{xc002}--\eqref{yq07}   to complete the proof.
\end{proof}

\section{Error estimates} \label{sec4}
\subsection{Error estimate of the time-discrete scheme}

We derive error estimates for the time-discrete scheme and the fully discrete Galerkin scheme. We first establish the following three auxiliary lemmas for future use.

\begin{lemma} \label{lem01}
 Suppose that \eqref{req:new} holds, $\|\Delta^{m+2} u_0\|$   is bounded, $\|\Delta^{m+1} \partial_t^3 f(\cdot,t)\| \leq Q t^{-\sigma}$  and $\|\Delta^{m} \partial_t^4 f(\cdot,t)\| \leq Q t^{-\sigma}$  for $t>0$ and some $0<\sigma < 1$ as  well as $m\geq \frac{3}{\alpha}$.
If $\rho_k \leq \rho_{\max}$, then it holds that
    \begin{align*}
          \|(\mathcal{R}_1)^k\|  \leq Q \tau_k^{2+\alpha}, \quad k =1,2.
    \end{align*}

  In addition, suppose $\rho_k \in [0.5,1.7319]$, then we have
    \begin{align*}
         \|(\mathcal{R}_1)^k\| \leq Q \tau_k^{2+\alpha}, \quad k\geq 3.
    \end{align*}
\end{lemma}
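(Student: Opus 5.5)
The plan is to combine Taylor expansion with integral remainder and the regularity estimates of Theorems \ref{thm1}--\ref{thm2}, applied to the truncated solution $\hat v$. First I check that these theorems transfer to $\hat v$. The truncated source $I_t^{m\alpha}\Pi_L(\Delta^m\tilde f)$ has the same structure as in \eqref{qwl01}. Every derivative of $\Pi_L(\Delta^m \tilde f)$ satisfies all boundary conditions automatically, and $\Pi_L$ is $L^2$- and $\dot H^2$-contractive. Hence the proofs go through verbatim and give
\[
\|\partial_t^{j}\hat v(\cdot,t)\|\le Q\, t^{(m+1)\alpha-j}, \quad j=1,2,3,4 .
\]
Since $m\ge 3/\alpha$, we have $(m+1)\alpha\ge 3+\alpha$. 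Therefore $\partial_t^2\hat v$ and $\partial_t^3\hat v$ are bounded on $[0,T]$, and even $\|\partial_t^4 \hat v\|\le Q t^{(m+1)\alpha-4}\le Q t^{\alpha-1}$, which is integrable. So the solution is effectively smooth, and the truncation errors are classical.

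\textbf{Case $k=1,2$.} Here $(\mathcal{R}_1)^k=\frac1{\tau_k}\int_{t_{k-1}}^{t_k}(t_{k-1}-s)\partial_s^2\hat v\,ds$. A crude bound gives only $O(\tau_k)$, so the small-time decay must be used. I write $\|\partial_s^2\hat v(s)\|\le Q s^{(m+1)\alpha-2}\le Q s^{1+\alpha}$. For $k=1$ this gives $\|(\mathcal{R}_1)^1\|\le Q\tau_1^{-1}\int_0^{t_1}(t_1-s)s^{1+\alpha}ds\le Q\tau_1^{2+\alpha}$. For $k=2$ I bound $s\le t_2=\tau_2(1+1/\rho_2)$, and the ratio bound $\rho_k\le\rho_{\max}$ only enters through the constant. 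Strictly, one also needs $\rho_2$ bounded below to control $t_2/\tau_2$; I plan to use the mesh assumption $\rho_2\ge0.5$, or simply absorb $t_2^{1+\alpha}\tau_2$ into the constant together with the fact that the initial steps are small.

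\textbf{Case $k\ge3$.} The variable-step BDF3 formula \eqref{qwl002} is exact for cubic polynomials, since $\sigma_j$ are the Lagrange-derivative weights. I Taylor-expand $\hat v$ around $t_k$ with the fourth-order integral remainder $\frac1{6}\int (t_{k-j}-s)^3\partial_s^4\hat v\,ds$. Each $\delta_t w^{k-j}$ is a difference quotient over an interval contained in $[t_{k-3},t_k]$, and the coefficients $\sigma_j^{(k)}$ are uniformly bounded when $\rho_k\in[0.5,1.7319]$. This yields
\[
\|(\mathcal{R}_1)^k\|\le Q\,\tau_k^{2}\int_{t_{k-3}}^{t_k}\|\partial_s^4\hat v\|\,ds ,
\]
using $\tau_{k-1},\tau_{k-2}\le Q\tau_k$ by the ratio bounds. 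With $\|\partial_s^4\hat v\|\le Qs^{\alpha-1}$, the integral is at most $Q\big(t_k^\alpha-t_{k-3}^\alpha\big)\le Q(3Q\tau_k)^\alpha$, by subadditivity of $x\mapsto x^\alpha$. This gives $Q\tau_k^{2+\alpha}$. Alternatively one can use boundedness of $\partial_t^3\hat v$ and $\partial_t^4\hat v$ to reach $\tau_k^3$ directly, but the stated rate suffices.

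The main obstacle is the first step: rigorously carrying Theorems \ref{thm1}--\ref{thm2} over to $\hat v$, in particular the bound for $\partial_t^4\hat v$ in the $L^2$ norm under the hypothesis on $\Delta^m\partial_t^4 f$. A second point of care is handling the singular weight near $t=0$ in the BDF3 remainder when $k=3$, where $t_{k-3}=0$. There I would use $\int_0^{t_3}s^{\alpha-1}ds=t_3^\alpha/\alpha\le Q\tau_3^\alpha$, which holds because $t_3\le Q\tau_3$ under the ratio bounds.
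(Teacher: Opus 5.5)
Your proposal is correct and follows essentially the paper's route. For $k=1,2$ you use the integral-remainder form of the backward Euler error with $\|\partial_s^2\hat v\|\le Qs^{1+\alpha}$. For $k\ge3$ you use a Peano-kernel representation whose kernels are $O(\tau_k^3)$, integrated against $\|\partial_t^4\hat v\|\le Qt^{(m+1)\alpha-4}\le Qt^{\alpha-1}$. The paper quotes the explicit kernels $K_{k,i}$ of Liao et al.\ and handles $k=3$ separately via the factor $t^3$ in $K_{3,2}$, whereas your estimate $t_k^\alpha-t_{k-3}^\alpha\le(t_k-t_{k-3})^\alpha$ treats all $k\ge3$ at once.

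Your caveat at $k=2$ is correct, and the paper glosses over it: its bound tacitly uses $t_2\le Q\tau_2$, i.e.\ a lower bound on $\rho_2$, which is supplied by $\rho_k\in[0.5,1.7319]$ or by $\rho_k\ge1$ in Theorem~\ref{conv1}. Keep that fix and discard the ``absorb into the constant'' alternative. For $\rho_2\to0$ the error behaves like $\frac{\tau_2}{2}\partial_t^2\hat v(t_2)$ with $t_2\approx\tau_1\gg\tau_2$, which is not $O(\tau_2^{2+\alpha})$.
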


\begin{proof}
We prove the theorem in the following two cases.

\textbf{Case I: $\bm {k=1,2}$}.
   From Theorems \ref{thm1}--\ref{thm2}, we have the following regularity results
\begin{align*}
     \|\p_t \Delta \hat v (\cdot,t)\| + t\|\p_t^2 \Delta \hat v (\cdot,t)\| + t^2\|\p_t^3 \Delta \hat v (\cdot,t)\| \leq Qt^{(m+1)\alpha - 1}, \quad t\in (0,T],
\end{align*}
which, combined with $(\mathcal{R}_1)^k$ defined in \eqref{eq:27} and the assumptions of the theorem, gives the following estimate for $k=1,2$
\begin{align*}
     \|(\mathcal{R}_1)^k\|  \leq\frac{ Q }{\tau_k}\int_{t_{k-1}}^{t_k}|t_{k-1}-s| \left\|\partial_s^2\Delta \hat v(\cdot,s)\right\| ds \leq\frac{ Q }{\tau_k}\int_{t_{k-1}}^{t_k}|t_{k-1}-s| \,  s^{1+\alpha} \, ds \leq Q \tau_k^{2+\alpha}.
\end{align*}

\textbf{Case II: $ \bm {k\geq 3}$}.
We follow \cite[Equation~(4.7)]{Liao} to obtain the explicit expression of $(\mathcal{R}_1)^k$ defined below \eqref{yq01}
\begin{equation*}
    (\mathcal{R}_1)^k=\bar{\delta}_t \hat v^k - \partial_t \hat v^k
    = \sum_{i=k-2}^{k} \frac{1}{6\tau_i}
      \int_{t_{i-1}}^{t_i} K_{k,k-i}(t)\, \partial_t^4 \hat v(x,t)\, dt
\end{equation*}
with the involved integral kernels defined as follows
\begin{align*}
    K_{k,0}(t) &=
       \bigl(\sigma^{(k)}_{0}- \rho_k \sigma^{(k)}_{1}\bigr)(t_{k-1}-t)^3
       + \rho_k\bigl(\sigma^{(k)}_{1}- \rho_{k-1} \sigma^{(k)}_{2}\bigr)(t_{k-2}-t)^3
       + \rho_k \rho_{k-1} \sigma^{(k)}_{2}(t_{k-3}-t)^3, \\
    K_{k,1}(t) &=
       \bigl(\sigma^{(k)}_{1}- \rho_{k-1} \sigma^{(k)}_{2}\bigr)(t_{k-2}-t)^3
       + \rho_{k-1} \sigma^{(k)}_{2}(t_{k-3}-t)^3, \\
    K_{k,2}(t) &= \sigma^{(k)}_{2}(t_{k-3}-t)^3 .
\end{align*}

For $k=3$, we use Theorem \ref{thm2} and the assumption of the theorem to obtain
\begin{align*}
    \|(\mathcal{R}_1)^3\| & \leq Q \Big[  \tau_3^2 \int_{t_{2}}^{t_{3}} t^{(m+1)\alpha-4}dt +  \tau_2^2 \int_{t_{1}}^{t_{2}} t^{(m+1)\alpha-4}dt +  \tau_1^{-1} \int_{0}^{t_{1}} t^{(m+1)\alpha-1}dt  \Big] \\
    & \leq Q \sum_{j=1}^{3}\tau_j^{2+\alpha} \leq Q \tau_3^{2+\alpha}.
\end{align*}
For $k>3$,  we have
\begin{align*}
    \|(\mathcal{R}_1)^k\| & \leq Q \Big[  \tau_{k}^2 \int_{t_{k-1}}^{t_{k}} t^{(m+1)\alpha-4}dt +  \tau_{k-1}^2 \int_{t_{k-2}}^{t_{k-1}} t^{(m+1)\alpha-4}dt +  \tau_{k-2}^{2} \int_{t_{k-3}}^{t_{k-2}} t^{(m+1)\alpha-4}dt  \Big] \\
& \leq Q \sum_{j=k-2}^{k}\tau_j^{3}t_{j-1}^{(m+1)\alpha-4} \leq  Q \sum_{j=k-2}^{k}\tau_j^{3}t_{j-1}^{\alpha-1}  \leq  Q \tau_k^{2+\alpha}.
\end{align*}
We combine the above two estimates to finish the proof.
\end{proof}

\begin{lemma} \label{lem03}
Suppose that \eqref{req:new} holds, $\|\Delta^{m+2} u_0\|$ is bounded, $\|\Delta^{m+1} \partial_t^3 f(\cdot,t)\| \leq Q t^{-\sigma}$  and $\|\Delta^{m} \partial_t^4 f(\cdot,t)\| \leq Q t^{-\sigma}$  for $t>0$ and some $0<\sigma < 1$,  $\rho_k \in [\rho_L, \rho_R]$ and $m\geq \frac{3}{\alpha}-1$, then the following estimate holds
    \begin{align*}
         \|(\mathcal{R}_2)^k\| & \leq Q\Big[ \tau_k^{2+\alpha} + \sum_{j=2}^{k-1} \tau_j^3 \int_{t_{j-1}}^{t_j} (t_k-s)^{\alpha-2} ds \Big], \quad k\geq 1.
    \end{align*}
\end{lemma}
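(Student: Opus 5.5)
We will write $w:=\Delta\hat v$ and $\gamma:=1-\alpha$, so that $(\mathcal R_2)^k=\partial_t^{\gamma}w(t_k)-D_k^{\gamma}w$. Since $D_k^\gamma w=\partial_t^\gamma(\Lambda^k w)(t_k)$, the error is the Caputo derivative of the interpolation error:
\begin{align*}
(\mathcal R_2)^k=\frac{1}{\Gamma(1-\gamma)}\sum_{j=1}^{k}\int_{t_{j-1}}^{t_j}(t_k-s)^{-\gamma}\,\partial_s\big(w-\Lambda^k w\big)(s)\,ds .
\end{align*}
We take the regularity input from Theorems \ref{thm1}--\ref{thm2}, applied to the truncated problem \eqref{yq4}. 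Because $\Pi_L(\Delta^m\tilde f)$ satisfies every boundary compatibility condition, these results transfer to $\hat v$. They give $\|\partial_t^{\ell}w(\cdot,t)\|\le Q t^{(m+1)\alpha-\ell}$ for $\ell=1,2,3$. The condition $m\ge 3/\alpha-1$ guarantees $(m+1)\alpha\ge 3$, so these bounds reduce to $\|\partial_t^{\ell}w\|\le Q$ for $\ell\le 3$. Thus $w$ is effectively $C^3$ in time, and no graded-mesh argument is needed.

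\textbf{First interval ($j=1$).} For $k=1$, $\Lambda_{1,1}$ is linear interpolation. Integrating by parts in $s$ moves the derivative onto the kernel; the boundary terms vanish because the interpolation error vanishes at the nodes. The linear-interpolation error is $O(\tau_1^2\sup\|\partial_t^2 w\|)$, so we get $\|(\mathcal R_2)^1\|\le Q\tau_1^{2}\tau_1^{-\gamma}=Q\tau_1^{1+\alpha}$. This falls short of $\tau_1^{2+\alpha}$, so here we must use the sharper decay of $\partial_t^2w$ near $0$. Write $(m+1)\alpha-2\ge 1$, so that $\|\partial_t^2 w(s)\|\le Qs$, and estimate the error directly as $\int_0^{\tau_1}\|\partial_t^2 w\|\cdot$(kernel) $\le Q\tau_1^{3-\gamma}$. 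For $k\ge2$, the $j=1$ piece uses the quadratic $\Lambda_{2,1}$, whose error is $O(\tau^3\sup\|\partial_t^3 w\|)$; this is bounded by $Q\tau_1^{3}(t_k-t_1)^{-\gamma}$ or $Q\tau_1^{3-\gamma}$, which is at most $Q\tau_k^{2+\alpha}$ up to constants in the mesh ratio.

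\textbf{Interior intervals ($2\le j\le k-1$).} Integration by parts gives
\[
\int_{t_{j-1}}^{t_j}(t_k-s)^{-\gamma}\partial_s(w-\Lambda_{2,j}w)\,ds=-\gamma\int_{t_{j-1}}^{t_j}(t_k-s)^{-\gamma-1}(w-\Lambda_{2,j}w)\,ds,
\]
and the boundary terms again vanish at the nodes. The quadratic interpolation error satisfies $\|w-\Lambda_{2,j}w\|\le Q(\tau_j+\tau_{j+1})^3\sup\|\partial_t^3 w\|\le Q\tau_j^3$, where we use $\rho_k\in[\rho_L,\rho_R]$. Since $-\gamma-1=\alpha-2$, this produces exactly the sum $\sum_{j=2}^{k-1}\tau_j^3\int_{t_{j-1}}^{t_j}(t_k-s)^{\alpha-2}ds$ in the statement.

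\textbf{Last interval ($j=k$).} Here $\Lambda_{2,k-1}$ extrapolates from $\{t_{k-2},t_{k-1},t_k\}$, and on $[t_{k-1},t_k]$ the error is still $O(\tau_k^3)$ by bounded ratios. We keep the derivative form: $\|\partial_s(w-\Lambda_{2,k-1}w)\|\le Q\tau_k^2$, and $\int_{t_{k-1}}^{t_k}(t_k-s)^{-\gamma}ds\le Q\tau_k^{1-\gamma}=Q\tau_k^{\alpha}$. Together these give $Q\tau_k^{2+\alpha}$.

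The main obstacle is the first interval and the extrapolated last interval, where one must carefully track the behaviour of $\partial_t^2 w,\partial_t^3 w$ near $t=0$ and the mesh-ratio constants. The interior sum is routine. We expect the condition on $m$ to be exactly what removes the singular weights, so that all three pieces reduce to standard Taylor-remainder bounds.
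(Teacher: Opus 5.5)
Most of your outline matches the paper's proof. The condition $(m+1)\alpha\ge 3$ makes $\partial_t^\ell\Delta\hat v$ bounded for $\ell\le 3$. For $k=1$, the bound $\|\partial_t^2 w(s)\|\le Qs$ lifts $\tau_1^{1+\alpha}$ to $\tau_1^{2+\alpha}$. The interior intervals are integrated by parts against $(t_k-s)^{\alpha-2}$ using the cubic interpolation remainder. On $[t_{k-1},t_k]$, your $O(\tau_k^2)$ bound on $\partial_s(w-\Lambda_{2,k-1}w)$ is equivalent to the paper's integration by parts with $\|w-\Lambda_{2,k-1}w\|\le Q\tau_k^2(t_k-s)$.

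The gap is the $j=1$ piece for $k\ge 2$. You bound it by $Q\tau_1^3(t_k-t_1)^{\alpha-1}\le Q\tau_1^{2+\alpha}$ and then declare $\tau_1^{2+\alpha}\le Q\tau_k^{2+\alpha}$ ``up to constants in the mesh ratio''. The lemma only assumes $\rho_k\in[\rho_L,\rho_R]$ with $\rho_L\approx 0.457<1$. So $\tau_1/\tau_k$ can be as large as $\rho_L^{1-k}$, and that constant is not uniform in $k$.

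Nor can the intermediate bound be absorbed into the sum. Let the steps shrink geometrically from $\tau_1$ to some $\epsilon\ll\tau_1$ and then stay equal to $\epsilon$ until $t_k-t_1=D\gg\tau_1$. Then the right-hand side of the lemma is $O(\epsilon^{2+\alpha}+\tau_1^4D^{\alpha-2})$, far below $\tau_1^3D^{\alpha-1}$. The paper makes the same leap: its last display still carries a $\tau_2^{2+\alpha}$ term that is dropped without comment. This is harmless in Theorem~\ref{conv1}, where $\rho_k\ge 1$ gives $\tau_1\le\tau_k$, but not under the lemma's own hypothesis.

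The repair is to treat $[0,t_1]$ exactly like the interior intervals. For $k\ge 2$ the kernel $\beta_\alpha(t_k-s)$ is smooth on $[0,t_1]$. The error $e:=w-\Lambda_{2,1}w$ vanishes at $s=0$ and $s=t_1$, and $\sup_{[0,t_1]}\|e\|\le Q\tau_1^2(\tau_1+\tau_2)\le Q\tau_1^3$. Integration by parts then gives
\begin{equation*}
\Big\|\int_0^{t_1}\beta_\alpha(t_k-s)\,\partial_s e(s)\,ds\Big\|\le Q\tau_1^3\int_0^{t_1}(t_k-s)^{\alpha-2}\,ds .
\end{equation*}
For $k=2$ the integral is at most $\tau_2^{\alpha-1}/(1-\alpha)$, and $\tau_1\le\tau_2/\rho_L$ gives $Q\tau_2^{2+\alpha}=Q\tau_k^{2+\alpha}$. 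For $k\ge 3$, since $(t_k-s)^{\alpha-2}$ is increasing in $s$, we have $\tau_1^3\int_0^{t_1}(t_k-s)^{\alpha-2}ds\le \tau_1^4(t_k-t_1)^{\alpha-2}\le \rho_L^{-4}\,\tau_2^3\int_{t_1}^{t_2}(t_k-s)^{\alpha-2}ds$. The last expression is exactly the $j=2$ term of the sum in the statement. With this change your argument proves the lemma with $Q$ independent of $k$.
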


\begin{proof}
\textbf{Case I:} For $k=1$, we combine \eqref{eq:27}  to obtain
     \begin{align*}
          (\mathcal{R}_2)^1 =  \int_{0}^{t_1}\beta_{\alpha}(t_1-s) \partial_s\big( \Delta \hat v - \Lambda_{1,1}(\Delta \hat v) \big)(\cdot,s) ds.
     \end{align*}
 We combine the integral remainder expansion of the  linear interpolation to obtain
\begin{align*}
  \partial_t \big( \Delta \hat v - \Lambda_{1,1}(\Delta \hat v) \big) = \frac{1}{t_1} \int_{0}^{t} s\, \partial_s^2\Delta \hat v(\cdot,s)\, ds
 - \frac{1}{t_1} \int_{t}^{t_1} (t_1 - s)\, \partial_s^2\Delta \hat v(\cdot,s)\, ds,
\end{align*}
 which, combined with Theorem \ref{thm1} and the assumptions of the theorem, we have
 $\|\partial_t \big( \Delta \hat v - \Lambda_{1,1}(\Delta \hat v) \big)\| \leq Qt_1^2$. Thus we have
 \begin{align*}
     \|(\mathcal{R}_2)^1\| \leq  Qt_1^2 \int_{0}^{t_1}\beta_{\alpha}(t_1-s)  ds \leq Q \tau_1^{2+\alpha}.
 \end{align*}

\textbf{Case II:} Then, for $k\geq 2$ we rewrite \eqref{eq:27} as
     \begin{align*}
          (\mathcal{R}_2)^k & = \sum_{j=1}^{k-1} \int_{t_{j-1}}^{t_j}\beta_{\alpha}(t_k-s) \partial_s\big( \Delta \hat v - \Lambda_{2,j}(\Delta \hat v) \big)(\cdot,s) ds \\
& \quad + \int_{t_{k-1}}^{t_k}\beta_{\alpha}(t_k-s) \partial_s\big( \Delta \hat v - \Lambda_{2,k-1}(\Delta \hat v) \big)(\cdot,s) ds.
     \end{align*}
On the interval $(t_{0},t_1)$, we see that $\|\partial_s\big( \Delta \hat v - \Lambda_{2,1}(\Delta \hat v) \big)(\cdot,s)\|\leq Qs^{(m+1)\alpha-1}$ (see a similar result from \cite[Equation (5.12)]{Stynes}), thus for $m\geq \frac{3}{\alpha}-1$
\begin{align*}
     \Big\| \int_{0}^{t_1}\beta_{\alpha}(t_k-s) \partial_s\big( \Delta\hat v - \Lambda_{2,1}(\Delta\hat v) \big)(\cdot,s) ds \Big\| & \leq Q(t_k-t_1)^{\alpha-1}   \int_{0}^{t_1} s^{(m+1)\alpha - 1} ds \\
     & \quad \leq Q \tau_1^{\alpha-1} t_1^{(m+1)\alpha} \leq Q \tau_1^{2+\alpha}.
\end{align*}
On the interval $(t_{j-1},t_j)$ with $2\leq j \leq k-1$, we get
\begin{align*}
    \|\big( \Delta \hat v - \Lambda_{2,j}(\Delta\hat v) \big)(\cdot,s)\| = \Big\| \frac{(\partial_s^3\Delta\hat v)(\cdot,\xi)}{6}(s-t_{j-1})(s-t_j)(s-t_{j+1}) \Big\|,
\end{align*}
where $\xi\in (t_{j-1},t_{j+1})$. We then follow Theorem \ref{thm2} and the assumptions of the theorem to get
\begin{align*}
    \Big\|  \int_{t_{j-1}}^{t_j}\beta_{\alpha}(t_k-s) & \partial_s\big( \Delta\hat v - \Lambda_{2,j}(\Delta\hat v) \big)(\cdot,s) ds \Big\|  = \Big\| (\alpha-1)  \int_{t_{j-1}}^{t_j}\frac{(t_k-s)^{\alpha-2}}{\Gamma(\alpha)} \big( \Delta\hat v - \Lambda_{2,j}(\Delta\hat v) \big)(\cdot,s) ds \Big\| \\
 & \leq Q \tau_j^3 t_j^{(m+1)\alpha -3} \int_{t_{j-1}}^{t_j} (t_k-s)^{\alpha-2} ds  \leq Q \tau_j^3 \int_{t_{j-1}}^{t_j} (t_k-s)^{\alpha-2} ds,
\end{align*}
which leads to
\begin{align*}
    \Big\| \sum_{j=2}^{k-1} \int_{t_{j-1}}^{t_j} \beta_{\alpha}(t_k-s) \partial_s\big( \Delta\hat v - \Lambda_{2,j}(\Delta\hat v) \big)(\cdot,s) ds \Big\| \leq Q \sum_{j=2}^{k-1} \tau_j^3 \int_{t_{j-1}}^{t_j} (t_k-s)^{\alpha-2} ds.
\end{align*}
On the interval $(t_{k-1},t_k)$, we have
\begin{align*}
      \|\big( \Delta\hat v - \Lambda_{2,k-1}(\Delta\hat v) \big)(\cdot,s)\| \leq Q |(s-t_{k-2})(s-t_{k-1})(s-t_{k})| \leq Q\tau_k^2 (t_k-s),
\end{align*}
which further gives
\begin{align*}
    \Big\|  \int_{t_{k-1}}^{t_k}\beta_{\alpha}(t_k-s) & \partial_s\big( \Delta\hat v - \Lambda_{2,k-1}(\Delta\hat v) \big)(\cdot,s) ds \Big\| \\
    & = \Big\| (\alpha-1)  \int_{t_{k-1}}^{t_k}\frac{(t_k-s)^{\alpha-2}}{\Gamma(\alpha)} \big( \Delta\hat v - \Lambda_{2,k-1}(\Delta\hat v) \big)(\cdot,s) ds \Big\| \\
 & \leq Q \tau_k^2 \int_{t_{k-1}}^{t_k}\frac{(t_k-s)^{\alpha-1}}{\Gamma(\alpha)} ds \leq Q \tau_k^{2+\alpha}.
\end{align*}
Combining the above analysis, we have
    \begin{align*}
          \|(\mathcal{R}_2)^k\| \leq Q\Big[ \tau_2^{2+\alpha} + \sum_{j=2}^{k-1} \tau_j^3 \int_{t_{j-1}}^{t_j} (t_k-s)^{\alpha-2} ds + \tau_k^{2+\alpha} \Big], \quad k\geq 2.
    \end{align*}
This completes the proof.
\end{proof}

\vskip 1mm
We further give the following convergence result based on the above lemmas.
\begin{theorem}\label{conv1}
   Suppose the assumptions in Lemma \ref{lem01} hold, $\rho_k \in [1,1.7319]$ and $m\geq \frac{3}{\alpha}$, then the following estimates holds for the time-discrete scheme \eqref{xc01}--\eqref{xc002}
    \begin{align*}
         \|\hat u^n-U^n\| \leq \|\hat v^n-V^n\| \leq Q   \tau_n^{2+\alpha}, \quad n\geq 1.
    \end{align*}
\end{theorem}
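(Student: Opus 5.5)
The plan is to write down the error equation for $e^k:=\hat v^k-V^k$ and apply the stability argument of Theorem \ref{thm3.3} to it, with the truncation errors playing the role of the source. Subtracting \eqref{xc01}--\eqref{xc02} from \eqref{yq02}--\eqref{yq01} gives
\begin{align*}
\delta_t e^k - D_k^{1-\alpha}(\Delta e) = (\mathcal R_1)^k+(\mathcal R_2)^k,\quad k=1,2,\qquad
\bar\delta_t e^k - D_k^{1-\alpha}(\Delta e) = (\mathcal R_1)^k+(\mathcal R_2)^k,\quad k\ge 3,
\end{align*}
with $e^0=0$. Since $\rho_k\in[1,1.7319]\subset[\rho_L,\rho_R]\cap[0.5,1.7319]$, both Lemma \ref{psd} and Lemma \ref{BDF3} apply. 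Repeating the proof of Theorem \ref{thm3.3} verbatim with $F^k$ replaced by $(\mathcal R_1)^k+(\mathcal R_2)^k$ then yields
\[
\|e^n\|\le Q\Big(\sum_{k=1}^n\tau_k\big(\|(\mathcal R_1)^k\|^2+\|(\mathcal R_2)^k\|^2\big)\Big)^{1/2}.
\]
The first inequality $\|\hat u^n-U^n\|\le\|\hat v^n-V^n\|$ is immediate, because \eqref{yq00} and \eqref{xc002} add the same known terms to $\hat v^n$ and $V^n$; in fact it is an equality.

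Next I bound the residuals. Lemma \ref{lem01} gives $\|(\mathcal R_1)^k\|\le Q\tau_k^{2+\alpha}$. For $(\mathcal R_2)^k$, Lemma \ref{lem03} (valid since $m\ge 3/\alpha>3/\alpha-1$) gives $\tau_k^{2+\alpha}$ plus the history sum $\sum_{j=2}^{k-1}\tau_j^3\int_{t_{j-1}}^{t_j}(t_k-s)^{\alpha-2}ds$. Because $\rho_k\ge1$, the steps are nondecreasing, so $\tau_j\le\tau_k$. Hence the sum is at most
\[
\tau_k^3\int_{0}^{t_{k-1}}(t_k-s)^{\alpha-2}ds\le \tfrac{1}{1-\alpha}\tau_k^3\,\tau_k^{\alpha-1}=Q\tau_k^{2+\alpha}.
\]
Therefore $\|(\mathcal R_1)^k\|+\|(\mathcal R_2)^k\|\le Q\tau_k^{2+\alpha}\le Q\tau_n^{2+\alpha}$ for all $k\le n$, again by monotonicity of the steps.

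Inserting this into the stability bound gives $\|e^n\|\le Q(\sum_k\tau_k)^{1/2}\tau_n^{2+\alpha}=Q\sqrt{t_n}\,\tau_n^{2+\alpha}\le Q\sqrt T\,\tau_n^{2+\alpha}$, which is the claim.

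The main obstacle is the history sum in $(\mathcal R_2)^k$. Lemma \ref{lem03} already exploits $m\ge 3/\alpha-1$ to remove the weight $t_j^{(m+1)\alpha-3}$, so what remains is controlling $\int(t_k-s)^{\alpha-2}$ near $s=t_{k-1}$, and that is exactly where nondecreasing steps ($\rho_k\ge1$) are needed. A second point to check is that the startup bound on $\mathcal D_2$ in Theorem \ref{thm3.3} carries over unchanged for the residual source; it does, since that step uses only the $k=1,2$ equations.
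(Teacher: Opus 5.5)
Your proposal is correct and follows the paper's proof essentially step for step. It uses the same error equations fed into the stability argument of Theorem~\ref{thm3.3}, Lemma~\ref{lem01} for $(\mathcal R_1)^k$, and Lemma~\ref{lem03} together with $\rho_k\ge 1$ (so $\tau_j\le\tau_k$) to reduce the history sum to $\tau_k^3\int_{t_1}^{t_{k-1}}(t_k-s)^{\alpha-2}\,ds\le Q\tau_k^{2+\alpha}$. Where you go slightly further, it is only to make explicit what the paper leaves implicit: you use the monotone steps to pass from $\max_k\tau_k^{2+\alpha}$ to $\tau_n^{2+\alpha}$, and you note that $\hat u^n-U^n=\hat v^n-V^n$ holds exactly.
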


\begin{proof} Define $(e_1)^k := \hat v^k-V^k$ and $(e_2)^k:= \hat u^k-U^k$. By subtracting \eqref{xc01}--\eqref{xc002} from \eqref{yq02}, \eqref{yq01} and \eqref{yq00}, we get the following error equations
\begin{align}
   \delta_t (e_1)^k  - D_{k}^{1-\alpha}(\Delta (e_1)) & = (\mathcal{R}_1)^k + (\mathcal{R}_2)^k, \quad k=1,2, \quad (e_1)^0 = 0,  \label{zxc01} \\
  \bar{\delta}_t (e_1)^k  - D_{k}^{1-\alpha}(\Delta (e_1))  & =  (\mathcal{R}_1)^k + (\mathcal{R}_2)^k, \quad k \geq 3,    \label{zxc02} \\
  (e_2)^k = (e_1)^k, \quad k \geq 1.    \label{zxc03}
\end{align}
We follow the procedures of Theorem~\ref{thm3.3} to arrive at the following estimate for $ \|(e_1)^n\|$ in
\eqref{zxc01}--\eqref{zxc02}
 \begin{align*}
     \|(e_1)^n\| \leq Q \Big( \sum_{k=1}^{n}\tau_k \|(\mathcal{R}_1)^k+(\mathcal{R}_2)^k\|^2 \Big)^{1/2} \leq Q \Big( \sum_{k=1}^{n}\tau_k \|(\mathcal{R}_1)^k\|^2 \Big)^{1/2} + Q \Big( \sum_{k=1}^{n}\tau_k \|(\mathcal{R}_2)^k\|^2 \Big)^{1/2}.
 \end{align*}
For $\rho_k \leq \rho_{\text{max}}$, we use Lemma \ref{lem01} to yield
\begin{align*}
    \Big( \sum_{k=1}^{n}\tau_k \|(\mathcal{R}_1)^k\|^2 \Big)^{1/2} & =  \Big( \sum_{k=1}^{2}\tau_k \|(\mathcal{R}_1)^k\|^2 \Big)^{1/2} + \Big( \sum_{k=3}^{n}\tau_k \|(\mathcal{R}_1)^k\|^2 \Big)^{1/2} \\
& \leq Q \tau_1^{5/2+\alpha} + Q\tau_2^{5/2+\alpha} + Q \; \max_{3\leq k \leq n} \tau_k^{2+\alpha}.
\end{align*}
Then for $\rho_k \in [1,1.7319]$, we utilize Lemma \ref{lem03} to obtain
\begin{align*}
    \|(\mathcal{R}_2)^k\| \leq Q \Big[ \tau_k^{2+\alpha} + \tau_k^3  \int_{t_{1}}^{t_{k-1}} (t_k-s)^{\alpha-2} ds \Big] \leq Q\tau_k^{2+\alpha}, \quad k\geq 2,
\end{align*}
which further gives
\begin{align*}
    \Big( \sum_{k=1}^{n}\tau_k \|(\mathcal{R}_2)^k\|^2 \Big)^{1/2} & = \Big( \tau_1 \|(\mathcal{R}_2)^1\|^2 \Big)^{1/2} + \Big( \sum_{k=2}^{n}\tau_k \|(\mathcal{R}_2)^k\|^2 \Big)^{1/2}  \leq  Q \tau_n^{2+\alpha}.
\end{align*}
We combine the above two estimates and invoke \eqref{zxc03} to complete the proof.
\end{proof}

\subsection{Error estimate of  the fully discrete Galerkin scheme}

We now construct and analyze a fully discrete Galerkin scheme for
\eqref{xc01}--\eqref{xc002}. Let $\Omega$ be partitioned quasi-uniformly with
mesh diameter $h$, and denote by $S_h$ the finite element space of continuous
piecewise linear functions over this partition. The Ritz projection
$I_h: H_0^1(\Omega)\to S_h$ is defined via
\begin{align*}
  (\nabla (\omega - I_h \omega), \nabla \chi ) = 0, \quad  \text{for all } \chi \in S_h
\end{align*}
with the following approximation property
\begin{align}
   \left\| \partial_t^{\hat q} (\omega - I_h\omega)  \right\|_{L^2(\Omega)}\leq Q h^2  \| \partial_t^{\hat q} \omega  \|_{H^2(\Omega)}, \quad \hat q = 0, 1. \label{yq001}
\end{align}
Integrating  \eqref{yq02} and \eqref{yq01} against a test function
$\chi \in H_0^1(\Omega)$ over $\Omega$ yields the corresponding weak
formulation. Hence, for any $\chi \in H_0^1(\Omega)$ and for
$k = 1,2,\ldots,n$, we obtain
\begin{align}
   (\delta_t \hat v^k, \chi)  + (D_{k}^{1-\alpha}\nabla\hat v, \nabla\chi) & = (F^k, \chi) + ((\mathcal{R}_1)^k + (\mathcal{R}_2)^k, \chi), \quad k=1,2, \quad v^0 = 0,  \nonumber  \\
  (\bar{\delta}_t\hat v^k, \chi)  + (D_{k}^{1-\alpha}\nabla\hat v, \nabla\chi)  & =  (F^k, \chi) + ((\mathcal{R}_1)^k + (\mathcal{R}_2)^k, \chi), \quad k \geq 3, \label{FEM:e1}  \\
  \hat u^k = \hat v^k + u_0  & + \sum_{\ell=0}^{m-1} I_{t}^{(\ell+1)\alpha}\Pi_L(\Delta^{\ell}  \tilde{f})(x, t_k), \quad k \geq 1.   \nonumber
\end{align}
We omit  the local truncation errors to obtain the following fully discrete Galerkin
scheme: find $V_h^k$, $U_h^k \in S_h$  such that  $\forall \chi \in S_h$, we have
\begin{align}
   (\delta_t V_h^k, \chi)  + (D_{k}^{1-\alpha}\nabla V_h, \nabla\chi) & = (F^k, \chi), \quad k=1,2, \quad V_h^0 = 0,  \nonumber \\
  (\bar{\delta}_t V_h^k, \chi)  + (D_{k}^{1-\alpha}\nabla V_h, \nabla\chi)  & =  (F^k, \chi), \quad k \geq 3, \label{FE}  \\
  U_h^k = V_h^k +  I_h u_0 + \sum_{\ell=0}^{m-1} I_{t}^{(\ell+1)\alpha} &\Pi_L(\Delta^{\ell}  \tilde{f})(x, t_k), \quad k \geq 1 \nonumber
\end{align}
for $k = 1,2,\ldots,N$.

\begin{corollary}
    If the time-step ratio $\rho_k \in [0.5,1.7319]$, then the solutions of the fully discrete Galerkin scheme \eqref{FE}  satisfy
\begin{align*}
    \|V_h^n\| & \leq Q \Big( \sum_{k=1}^{n}\tau_k \|F^k\|^2 \Big)^{1/2}, \quad n\geq 1, \\
    \|U_h^n\| & \leq \|U_h^0\| +   Q\Big( \sum_{k=1}^{n}\tau_k \|F^k\|^2 \Big)^{1/2} + \Big\| \sum_{\ell=0}^{m-1} I_{t}^{(\ell+1)\alpha} \Pi_L (\Delta^{\ell}  \tilde{f})(\cdot, t_n) \Big\|,  \quad n\geq 1.
\end{align*}
\end{corollary}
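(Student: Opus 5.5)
The proof will repeat the energy argument of Theorem~\ref{thm3.3} in the discrete space $S_h$. The key point is that \eqref{FE} is a Galerkin identity valid for every $\chi\in S_h$, and $2\delta_k V_h = 2(V_h^k-V_h^{k-1})$ is an admissible test function. We therefore choose $\chi=2\delta_k V_h$ in \eqref{FE} for $k=1,\dots,n$ and sum the resulting identities. This gives
\begin{align*}
\sum_{k=1}^{2} (\delta_t V_h^k, 2\delta_k V_h) + \sum_{k=3}^{n} (\bar{\delta}_t V_h^k, 2\delta_k V_h) + 2\sum_{k=1}^{n}(D_k^{1-\alpha}\nabla V_h, \delta_k\nabla V_h) = 2\sum_{k=1}^{n}\tau_k (F^k,\delta_t V_h^k).
\end{align*}
The fractional term is nonnegative by Lemma~\ref{psd}, applied with $w=\nabla V_h$ and $\gamma=1-\alpha$. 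The required ratio restriction holds because $[0.5,1.7319]\subset[\rho_L,\rho_R]$. Lemma~\ref{BDF3} is purely algebraic and pointwise in $x$, so after integrating over $\Omega$ it gives the telescoping bound
\begin{align*}
\sum_{k=3}^n 2\tau_k(\delta_t V_h^k,\bar\delta_t V_h^k) \ge \mathcal D_n - \mathcal D_2 + s_*\sum_{k=3}^n\tau_k\|\delta_t V_h^k\|^2 .
\end{align*}
The first two steps use backward Euler and contribute $2\sum_{k=1}^2\tau_k\|\delta_t V_h^k\|^2$ directly.

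Next we drop $\mathcal D_n\ge0$ and absorb the right-hand side with Young's inequality using the constant $c_*=\min\{s_*,2\}$. This reproduces the analogue of \eqref{xc03}:
\begin{align*}
\frac{c_*}{2}\sum_{k=1}^n\tau_k\|\delta_t V_h^k\|^2 \le \mathcal D_2(\delta_t V_h^2,\delta_t V_h^1)+\frac{2}{c_*}\sum_{k=1}^n\tau_k\|F^k\|^2 .
\end{align*}
The starting term $\mathcal D_2$ is controlled exactly as in the proof of Theorem~\ref{thm3.3}. Testing the $k=1,2$ equations with $\delta_k V_h$ and again using the positivity from Lemma~\ref{psd} gives $\sum_{k=1}^2\tau_k\|\delta_t V_h^k\|^2\le Q\sum_{k=1}^2\tau_k\|F^k\|^2$. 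Since $\mathcal D_2$ is a quadratic form in $\sqrt{\tau_k}\,\delta_t V_h^k$ with bounded coefficients, this yields $\mathcal D_2\le Q\sum_{k=1}^2\tau_k\|F^k\|^2$.

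The $V_h^n$ bound then follows from $V_h^0=0$ and the discrete Cauchy--Schwarz inequality, $\|V_h^n\|\le\sqrt{t_n}\,(\sum_k\tau_k\|\delta_t V_h^k\|^2)^{1/2}$, with $t_n\le T$ absorbed into $Q$. For $U_h^n$ we apply the triangle inequality to the reconstruction formula in \eqref{FE}, writing $\|U_h^0\|$ for $\|I_h u_0\|$ with the convention $U_h^0=I_hu_0$.

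The only step needing care is the application of Lemmas~\ref{psd} and \ref{BDF3} to Hilbert-space-valued (here $S_h$-valued) sequences rather than scalars. For Lemma~\ref{BDF3} this is immediate, since the identity is quadratic and holds pointwise in $x$, hence also after integration. For Lemma~\ref{psd} it is the same usage already made in Theorem~\ref{thm3.3} with $\nabla V$. So no new obstacle arises, and the corollary is essentially Theorem~\ref{thm3.3} restricted to $S_h$.
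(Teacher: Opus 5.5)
Your proposal is correct and follows the paper's route: the paper's proof simply says to repeat the energy argument of Theorem~\ref{thm3.3} in $S_h$, and that is what you carry out, including the start-up bound for $\mathcal{D}_2$. Your test function $\chi = 2\delta_k V_h$ is the right one; the paper's one-line proof literally says $\chi = V_h^k$, but Lemmas~\ref{psd} and~\ref{BDF3} only control pairings with $\delta_k w$, so your choice is what actually lets the Theorem~\ref{thm3.3} argument carry over.
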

\begin{proof}
 By  choosing
$\chi = V_h^{k}$ in \eqref{FE}, the proof could be carried out by following that of  Theorem~\ref{thm3.3} and thus is omitted for simplicity.
\end{proof}

We next prove the error estimate for the fully discrete scheme \eqref{FE}.
\begin{theorem}\label{thm4.1}
Suppose the assumptions in Theorem~\ref{conv1} hold, then the following error estimate holds
\begin{align*}
    \|\hat u^n - U_h^n\| \le \|\hat v^n - V_h^n\| + \|u_0 - U_h^0\|
    \le Q \bigl( \tau_n^{2+\alpha} + h^2\bigr),
    \quad n \ge 1.
\end{align*}
\end{theorem}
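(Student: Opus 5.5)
We would follow the standard Ritz-projection splitting of the error. Write
\begin{align*}
\hat v^n - V_h^n = (\hat v^n - I_h \hat v^n) + (I_h \hat v^n - V_h^n) =: \eta^n + \theta^n, \qquad \theta^n\in S_h .
\end{align*}
The first inequality of the theorem follows directly from the definitions. Subtracting \eqref{FE} from \eqref{FEM:e1} gives $\hat u^n - U_h^n = (\hat v^n - V_h^n) + (u_0 - I_h u_0)$, so the triangle inequality applies, with $U_h^0 = I_h u_0$ understood. The term $\|u_0 - I_h u_0\|$ is bounded by $Qh^2\|u_0\|_{H^2}$ via \eqref{yq001}. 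The bound $\|\eta^n\|\le Qh^2\|\hat v(\cdot,t_n)\|_{H^2}$ also comes from \eqref{yq001}. Here $\|\hat v\|_{H^2}$ is uniformly bounded by integrating Theorem \ref{thm1}: since $\|\partial_t\hat v\|_{\dot H^2}\le Qt^{(m+1)\alpha-1}$ is integrable and $\hat v(0)=0$, we get $\|\hat v(t)\|_{\dot H^2}\le Q$. These regularity estimates transfer to $\hat v$ because $\Pi_L$ is a contraction commuting with $\Delta$.

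The core step is the equation for $\theta^k$. We restrict \eqref{FEM:e1} to $\chi\in S_h$ and subtract \eqref{FE}. By the defining property of the Ritz projection, $(D_k^{1-\alpha}\nabla\eta,\nabla\chi)=0$, because $D_k^{1-\alpha}$ is a linear combination of the nodal values and so commutes with $I_h$. This leaves
\begin{align*}
(\delta_t\theta^k,\chi)+(D_k^{1-\alpha}\nabla\theta,\nabla\chi) &= ((\mathcal R_1)^k+(\mathcal R_2)^k,\chi)-(\delta_t\eta^k,\chi), \quad k=1,2,\\
(\bar\delta_t\theta^k,\chi)+(D_k^{1-\alpha}\nabla\theta,\nabla\chi) &= ((\mathcal R_1)^k+(\mathcal R_2)^k,\chi)-(\bar\delta_t\eta^k,\chi), \quad k\ge 3,
\end{align*}
with $\theta^0=0$. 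We then apply the energy argument of Theorem \ref{thm3.3} (test with $2\delta_k\theta$ and use Lemmas \ref{psd} and \ref{BDF3}), i.e.\ the discrete corollary above. This yields
\begin{align*}
\|\theta^n\|\le Q\Big(\sum_{k=1}^n\tau_k\|(\mathcal R_1)^k+(\mathcal R_2)^k\|^2\Big)^{1/2}+Q\Big(\sum_{k=1}^n\tau_k\|\tilde\delta\eta^k\|^2\Big)^{1/2},
\end{align*}
where $\tilde\delta$ denotes $\delta_t$ for $k=1,2$ and $\bar\delta_t$ for $k\ge3$. The first sum is $\le Q\tau_n^{2+\alpha}$, exactly as in the proof of Theorem \ref{conv1}, using Lemmas \ref{lem01}--\ref{lem03}.

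The main obstacle is the projection-derivative term, because $\partial_t\hat v$ is singular at $t=0$. Since $\delta_t\eta^k=\tau_k^{-1}\int_{t_{k-1}}^{t_k}\partial_t\eta\,dt$, \eqref{yq001} with $\hat q=1$ and Theorem \ref{thm1} give
\begin{align*}
\|\delta_t\eta^k\|\le Qh^2\tau_k^{-1}\int_{t_{k-1}}^{t_k}t^{(m+1)\alpha-1}dt\le Qh^2 ,
\end{align*}
because $(m+1)\alpha-1>0$ when $m\ge 3/\alpha$; in fact $\partial_t\hat v$ is bounded in $H^2$. For $k\ge3$, $\bar\delta_t\eta^k$ is a combination of $\delta_t\eta^{k-j}$ with coefficients $\sigma_j^{(k)}$. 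These coefficients are bounded for $\rho_k\in[0.5,1.7319]$, which the hypotheses of Theorem \ref{conv1} guarantee, so again $\|\bar\delta_t\eta^k\|\le Qh^2$. Summing gives $(\sum_k\tau_k Q h^4)^{1/2}\le Q\sqrt{T}h^2$.

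Combining the bounds on $\eta^n$, $\theta^n$ and $u_0-I_hu_0$ yields $Q(\tau_n^{2+\alpha}+h^2)$, which completes the argument.
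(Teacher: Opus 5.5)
Your proposal is correct and is essentially the paper's proof: the same Ritz splitting (your $\theta$ is the paper's $\zeta$, and your $\eta$ is minus the paper's $\eta$), the energy argument of Theorem~\ref{thm3.3} applied to the projected error, Lemmas~\ref{lem01}--\ref{lem03} for the consistency errors, and \eqref{yq001} with Theorem~\ref{thm1} to get $\|\delta_t\eta^k\|,\|\bar\delta_t\eta^k\|\le Qh^2$. It is in fact tidier than the paper's version, since you test with $2\delta_k\theta$, which is what Lemmas~\ref{psd}--\ref{BDF3} require (the paper states $\chi=\zeta^k$), and you check explicitly that Ritz orthogonality removes $(D_k^{1-\alpha}\nabla\eta,\nabla\chi)$.

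One caution concerns your one-line justification for transferring Theorems~\ref{thm1}--\ref{thm2} to $\hat v$. The identity $\Pi_L\Delta g=\Delta\Pi_L g$ holds in general only when $g$ vanishes on $\partial\Omega$. That is exactly the compatibility \eqref{req:new} was meant to drop, so your justification does not cover that setting. The paper itself applies those theorems to $\hat v$ with no justification at all.
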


\begin{proof}
  For   convenience, we split the error into $\hat u(t_k)- U_h^k =\zeta^k - \eta ^k$ with $\zeta^k = I_h \hat v(t_k) - V_h^k \in S_h$ and $\eta^k = I_h \hat v(t_k) - \hat v(t_k)$ bounded in \eqref{yq001}. We subtract \eqref{FE} from \eqref{FEM:e1} and take $\chi = \zeta^k$ to arrive at the following error equations
\begin{align}
   (\delta_t \zeta^k, \zeta^k)  + (D_{k}^{1-\alpha}(\nabla \zeta), \nabla\zeta^k) & = ((\mathcal{R}_1)^k + (\mathcal{R}_2)^k + \delta_t \eta^k, \zeta^k), \quad k=1,2, \quad \zeta^0 = 0, \nonumber  \\
  (\bar{\delta}_t \zeta^k, \zeta^k)  + (D_{k}^{1-\alpha}(\nabla \zeta), \nabla\zeta^k)  & =  ((\mathcal{R}_1)^k + (\mathcal{R}_2)^k + \bar{\delta}_t \eta^k, \zeta^k), \quad k \geq 3,  \nonumber \\
   u^k - U_h^k = v^k - V_h^k & + (u_0-I_h u_0),  \quad k \geq 1. \label{wd}
\end{align}
We then follow the proof of Theorem \ref{conv1} to  obtain
 \begin{align*}
     \|\zeta^n\| \leq  Q \tau_n^{2+\alpha} +  Q \Big( \sum_{k=1}^{2}\tau_k \|\delta_t \eta^k\|^2 \Big)^{1/2} + Q \Big( \sum_{k=3}^{n}\tau_k \|\bar{\delta}_t \eta^k\|^2 \Big)^{1/2}.
 \end{align*}
We combine  \eqref{yq001}, the assumptions of the theorem  and
Theorem~\ref{thm1} to obtain
 \begin{align*}
   \|\delta_t \eta^k\| & \leq \frac{1}{\tau_k}\int_{t_{k-1}}^{t_k} \|\partial_t \eta \|dt \leq \frac{Qh^2}{\tau_k}\int_{t_{k-1}}^{t_k} \|\partial_t u(\cdot,  t)\|dt \leq  Q h^2, \quad k =1,2, \\
   \|\bar{\delta}_t \eta^k\| & \leq \sum_{j=0}^{2}  |\sigma_j^{(k)}|  \left\|\delta_t \eta^{k-j}\right\| \leq Q \sum_{j=0}^{2} \left\|\delta_t \eta^{k-j}\right\| \leq Qh^2, \quad 3\leq k \leq n.
 \end{align*}
Thus we have
\begin{align*}
  \|\zeta^n\| \leq Q \tau_n^{2+\alpha} +  Q h^2 \Rightarrow \|\hat v^n-V_h^n\| \leq \|\eta^n\| + \|\zeta^n\| \leq Q \big(   \tau_n^{2+\alpha} +   h^2 \big),
\end{align*}
which, together with \eqref{wd},  completes the proof.
\end{proof}

\begin{remark}
By  Theorem~\ref{thm4.1} and the  spectral truncation error \eqref{uuf}, the error estimate of $U_h^n$ to the solution $u^n:=u(x,t_n)$ of the original subdiffusion problem \eqref{eq1.1}--\eqref{eq1.2} could be
evaluated via the triangle inequality as follows
\begin{align}
    \|u^n-U_h^n\|
    &\leq \|\hat{u}^n-U_h^n\| + \|u^n-\hat{u}^n\| \nonumber\\
    &\leq Q(\tau_n^{2+\alpha}+h^2)
    +\hat Q\lambda_{L+1}^{-\hat{\gamma}/2}
    \|\tilde f\|_{C([0,T];H^{2m+\hat{\gamma}}(\Omega))},
    \quad n\geq 1. \label{err:yql}
\end{align}
    Here the positive constant  $Q$ is independent of
$h$, $\tau_n$, and $L$.

Since the eigenvalues \(\{\lambda_i\}_{i=1}^{\infty}\) form a
nondecreasing sequence satisfying \(\lambda_i\to+\infty\) as
\(i\to\infty\), the spectral truncation error tends to zero as
\(L\to\infty\). In particular, \(L\) may be chosen sufficiently large
such that
\[
\hat Q \lambda_{L+1}^{-\widehat{\gamma}/2}
\|\widetilde{f}\|_{C([0,T];H^{2m+\widehat{\gamma}}(\Omega))}
\leq Q(\tau_n^{2+\alpha}+h^2).
\]
Under this choice of \(L\), the estimate \eqref{err:yql} becomes
\[
\|u^n-U_h^n\|
\leq C\bigl(\tau_n^{2+\alpha}+h^2\bigr),
\qquad n\geq 1.
\]
\end{remark}

\section{Numerical experiments}

%\subsection{Setup of graded meshes}

We present several numerical experiments to validate the theoretical analysis. We define the following \emph{graded mesh} of the temporal interval $[0, T]$
\begin{align*}
    t_k = (k\tau)^r, \quad \tau=\frac{T^{1/r}}{N}, \quad k=0, 1, \cdots, N, \quad %\quad  \tau_k = t_k - t_{k-1}, \quad k =1,2,\cdots, N,
     1 \le r \le \log_{2}(2.7319)\approx 1.4499
    % \rho_k
    % = \frac{k^{r}-(k-1)^{r}}
    % {(k-1)^{r}-(k-2)^{r}}, \quad k\ge2.
\end{align*}
such that the time-step ratio
\[
    \rho_k = \frac{\tau_k}{\tau_{k-1}}
    = \frac{k^{r}-(k-1)^{r}}
    {(k-1)^{r}-(k-2)^{r}}, \quad k\ge2,
\]
 which is  a monotonically decreasing function  with respect to $k$, automatically satisfies
$\rho_k \in [1,1.7319]$ in Theorem \ref{conv1}.
%The spatial interval is uniformly partitioned using $h=(b-a)/J$ for some integer $J>0$ for $\Omega=(a,b)$.
For the one-dimensional case, we follow the two-mesh strategy~\cite[Page 107]{Farrell} to define the discrete $L^2$ errors
\begin{equation}\label{error1}
    {\rm Error}^1_N(\tau,h)=
    \Big(h\sum_{j=1}^{J-1} (U_j^{2N}-U_j^{N})^2\Big)^{1/2}, \quad
    {\rm Error}^1_J(\tau,h)=
    \Big(h\sum_{j=1}^{J-1} (U_{2j}^{N}-U_j^{N})^2\Big)^{1/2},
\end{equation}
and accordingly define the convergence rates as follows
\begin{equation}\label{order}
    \text{Rate}^1_{N}
    = \log_{2}\!\left(\frac{{\rm Error}^1_N}{{\rm Error}^1_{2N}}\right), \quad
    \text{Rate}^1_{J}
    = \log_{2}\!\left(\frac{{\rm Error}^1_J}{{\rm Error}^1_{2J}}\right).
\end{equation}
 For the two-dimensional  case, we could follow \eqref{error1} and \eqref{order} to accordingly define
 the discrete $L^2$ errors, i.e.,  ${\rm Error}^2_N(\tau,h)$ and ${\rm Error}^2_J(\tau,h)$ as well as
the convergence rates $\text{Rate}^2_{N}$ and $ \text{Rate}^2_{J}$, respectively.
%  let $h_x=h_y=h=\pi/J$ for some integer $J>0$, and then denote
% \begin{equation*}
%     {\rm Error}^2_N(\tau,h)=
%     \Big(h^2\sum_{i=1}^{J-1}\sum_{j=1}^{J-1} (U_{i,j}^{2N}-U_{i,j}^{N})^2\Big)^{1/2}, \quad
%     {\rm Error}^2_J(\tau,h)=
%     \Big(h^2\sum_{i=1}^{J-1}\sum_{j=1}^{J-1} (U_{2i,2j}^{N}-U_{i,j}^{N})^2\Big)^{1/2},
% \end{equation*}
% and the time-space convergence rates
% \begin{equation*}
%     \text{Rate}^2_{N}
%     = \log_{2}\!\left(\frac{{\rm Error}^2_N}{{\rm Error}^2_{2N}}\right), \quad
%     \text{Rate}^2_{J}
%     = \log_{2}\!\left(\frac{{\rm Error}^2_J}{{\rm Error}^2_{2J}}\right).
% \end{equation*}
Throughout this section, we fix $T=1$,  choose $J = 64$ to test the temporal convergence rates and accordingly choose
$N = 32$ to test the spatial convergence rates, respectively. In addition, we follow Theorem \ref{conv1} to choose $m=\lceil\frac{3}{\alpha}\rceil$, where $\lceil\cdot\rceil$ denotes the ceiling function symbol.

% We choose $m=\lceil\frac{3}{\alpha}\rceil$, where $\lceil\cdot\rceil$ denotes the ceiling function symbol. We set $J=64$ and $N=32$ to isolate the effect of temporal convergence and spatial convergence, respectively.

\subsection{Effects of MSD on numerical accuracy}
\begin{figure}[h]
\centering
\subfigure{
\begin{minipage}[t]{0.45\linewidth}
\centering
\includegraphics[width=\linewidth]{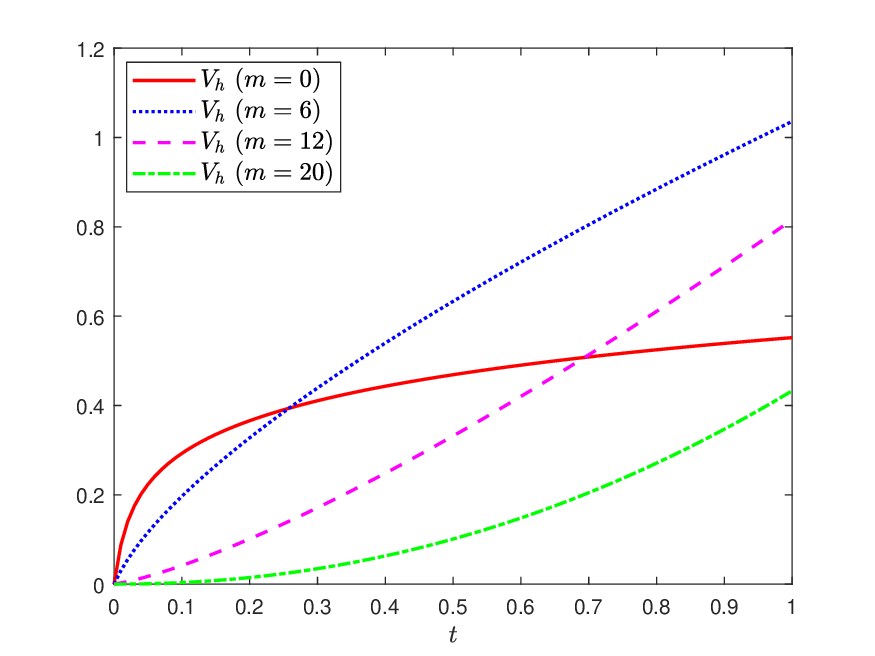}
\end{minipage}%
}
\subfigure{
\begin{minipage}[t]{0.45\linewidth}
\centering
\includegraphics[width=\linewidth]{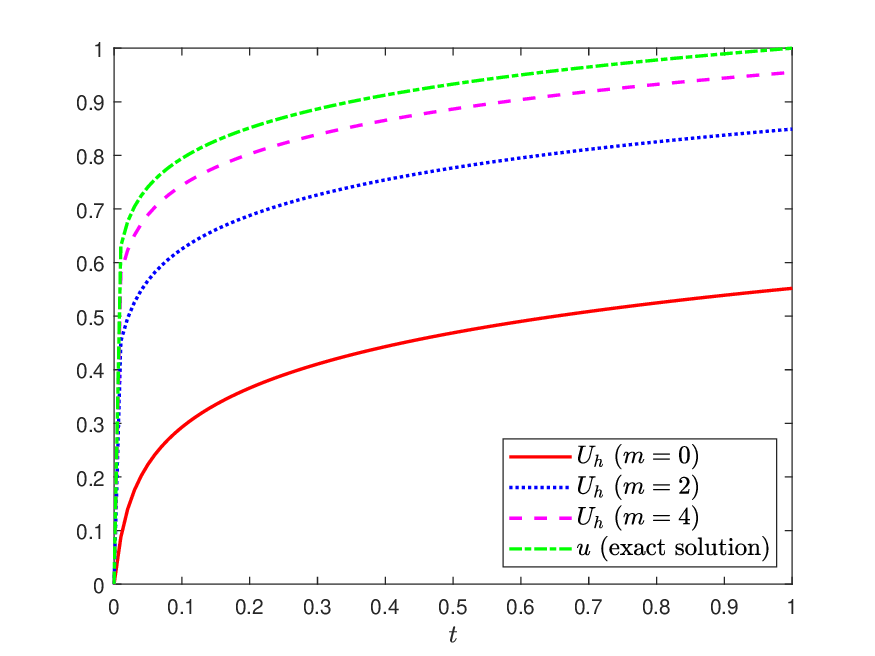}
\end{minipage}%
}%
\centering
\caption{Curves of solutions at $x=\pi/2$.}
\label{Fig1}
\end{figure}

To describe the superiority of MSD, we consider the following problem for illustration:  $\Omega = (0,2\pi)$, $T=1$, $u=t^{0.1}\sin(x)$, and the source term could be accordingly evaluated as $f=(\Gamma(1.1)+t^{0.1})\sin(x)$. We utilize BDF3--L2 method to plot the numerical approximations $V_h$ to \eqref{qwl01} as well as $U_h$ by \eqref{qwl001} at the spatial point $x=\pi /2$ using  the uniform temporal partition $\tau = 10^{-2}$ and the spatial step size $h=\f{\pi}{50}$ in Figure \ref{Fig1}, from which we have the following observations:
\begin{itemize}
\item By \eqref{qwl01}, both $V_h$  and $U_h$ exhibit the initial singularity  for $m=0$, which correspond to the fact that $V_h$  and $U_h$ are weakly singular near the initial time.  As the integer  $m$ increases, the behavior of $V_n$  becomes smoother by the improved regularity of the right-hand side term in \eqref{qwl01}. These observations indicate that the MSD technique could significantly improve the smoothness of the numerical approximation $V_h$ via a larger $m$, as expected in Theorems \ref{thm1}--\ref{thm2}.

\item Based on the improved regularity for $V_h$, the numerical accuracy of  $U_h$  with respect to the solution $u$ is further enhanced. In particular, the smoother feature of $V_h$ directly contributes to a better approximation of $u$ by $U_h$,  which accordingly gives more reliable and convincing numerical results.
\end{itemize}

\subsection{Comparison of convergence behavior}

\begin{table}
\center \footnotesize
\caption{Temporal errors and convergence rates   for \textbf{Example 1}.} \label{tab1}
%\vspace{-0.05in}
%\resizebox{\textwidth}{!}{
\begin{tabular}{cccccccccccc}
\toprule
&  \multicolumn{4}{c}{Our scheme}        &  \multicolumn{2}{c}{Scheme \cite{Quan}}  \\
\cmidrule{3-4}  \cmidrule{6-7}
& $N$  & ${\rm Error}^1_N(\tau,h)$ & $\text{Rate}^1_{N}$ & $N$  & ${\rm Error}^1_N(\tau,h)$ & $\text{Rate}^1_{N}$  \\
\midrule
                    &   512  &  $1.2067 \times 10^{-7}$ &  *      &  512  & $1.3336 \times 10^{-4}$  &  *     \\
  $\alpha =0.35$    &   1024 &  $2.2883 \times 10^{-8}$ &  2.40   &  1024 & $6.6358 \times 10^{-5}$  &  1.01  \\
                    &   2048 &  $4.3987 \times 10^{-9}$ &  2.38   &  2048 & $3.3050 \times 10^{-5}$  &  1.01  \\
                    &   4096 &  $8.5242 \times 10^{-10}$&  2.37   &  4096 & $1.6473 \times 10^{-5}$  &  1.00  \\
\midrule
                    &   512  &  $6.8798 \times 10^{-9}$ &  *      &  512  & $6.0489 \times 10^{-5}$  &  *     \\
  $\alpha =0.95$    &   1024 &  $8.5999 \times 10^{-10}$ &  3.00  &  1024 & $2.9634 \times 10^{-5}$  &  1.03  \\
                    &   2048 &  $1.0735 \times 10^{-10}$ &  3.00  &  2048 & $1.4661 \times 10^{-5}$  &  1.02  \\
                    &   4096 &  $1.3390 \times 10^{-11}$ &  3.00  &  4096 & $7.2909 \times 10^{-6}$  &  1.01  \\
\bottomrule
\end{tabular}
%    }
\end{table}

We conduct numerical examples to verify the theoretical findings and to compare the convergence behavior of the scheme \eqref{FE} with that in   \cite{Quan}.

\textbf{Example 1}
Let $\Omega=(0,2\pi)$. We choose the source term $f =\sin(x)$ and the initial condition
   $u_0(x)=\sin(x/2)$ which   satisfy the constraints under equation \eqref{qwl01}. In this case, we do not need to apply the truncation technique in subsection \ref{sec2-0} and  could directly apply \eqref{FE} to  approximate \eqref{kk} with $\Pi_L(\Delta^m \tilde f)$ on its right-hand side replaced by $\Delta^m \tilde f$. We compare the convergence behavior of the  BDF3--L2  scheme  \eqref{FE}  with the L2 method    \cite{Quan} on the uniform  mesh ($r=1$) and then present the   numerical results   in Table \ref{tab1}.

\begin{table}[h]
\center \footnotesize
\caption{Temporal errors and convergence rates   for \textbf{Example 2}.} \label{tab2}
%\vspace{-0.05in}
%\resizebox{\textwidth}{!}{
\begin{tabular}{cccccccccccc}
\toprule
&  \multicolumn{4}{c}{Our scheme}        &  \multicolumn{2}{c}{Scheme in \cite{Quan}}  \\
\cmidrule{3-4}  \cmidrule{6-7}
& $N$  & ${\rm Error}^2_N(\tau,h)$ & $\text{Rate}^2_{N}$ & $N$  & ${\rm Error}^2_N(\tau,h)$ & $\text{Rate}^2_{N}$  \\
\midrule
                    &   512  &  $1.0396 \times 10^{-4}$ &  *      &  512  & $1.9994 \times 10^{-4}$  &  *     \\
  $\alpha =0.35$    &   1024 &  $1.9969 \times 10^{-5}$ &  2.38   &  1024 & $9.9508 \times 10^{-5}$  &  1.01  \\
                    &   2048 &  $3.8683 \times 10^{-6}$ &  2.37   &  2048 & $4.9569 \times 10^{-5}$  &  1.01  \\
                    &   4096 &  $7.5323 \times 10^{-7}$ &  2.36   &  4096 & $2.4710 \times 10^{-5}$  &  1.00  \\
\midrule
                    &   512  &  $1.2553 \times 10^{-7}$ &  *      &  512  & $8.5699 \times 10^{-5}$  &  *     \\
  $\alpha =0.95$    &   1024 &  $1.5642 \times 10^{-8}$ &  3.00   &  1024 & $4.2149 \times 10^{-5}$  &  1.02  \\
                    &   2048 &  $1.9469 \times 10^{-9}$ &  3.01   &  2048 & $2.0894 \times 10^{-5}$  &  1.01  \\
                    &   4096 &  $2.4219 \times 10^{-10}$&  3.01   &  4096 & $1.0400 \times 10^{-5}$  &  1.01  \\
\bottomrule
\end{tabular}
%    }
\end{table}
We observe from Table \ref{tab1} that under the uniform  mesh, the MSD-based BDF3--L2  scheme  \eqref{FE}
achieves the temporal accuracy of order $2+\alpha$, which substantiates the theoretical findings in Theorem \ref{thm4.1}.
In contrast, the L2 scheme under the uniform mesh exhibits accuracy of only first order  \cite{Kopteva,Quan}. While  the high-order temporal accuracy of order $3-\alpha$ of the scheme in \cite{Kopteva,Quan} could be reached under the condition  $r > 3-\alpha$, the  L2 scheme \cite{Kopteva,Quan} simultaneously increases the numerical difficulties and challenges compared with  the BDF3–L2 scheme \eqref{FE}, which could achieve the high-order temporal accuracy even under the uniform grade.

\begin{table}
\center \footnotesize
\caption{Temporal errors and convergence rates  for \textbf{Example 3}.} \label{tab3}
%\vspace{-0.05in}
%\resizebox{\textwidth}{!}{
\begin{tabular}{cccccccccccc}
\toprule
&  \multicolumn{4}{c}{$r=1$}        &  \multicolumn{2}{c}{$r=1.4$}  \\
\cmidrule{3-4}  \cmidrule{6-7}
& $N$  & ${\rm Error}^1_N(\tau,h)$ & $\text{Rate}^1_{N}$ & $N$  & ${\rm Error}^1_N(\tau,h)$ & $\text{Rate}^1_{N}$  \\
\midrule
                    &  512   &  $1.5697 \times 10^{-6}$ &  *      &  512   & $2.1233 \times 10^{-6}$  &  *     \\
   $\alpha =0.25$   &  1024  &  $3.2271 \times 10^{-7}$ &  2.28   &  1024  & $4.4593 \times 10^{-7}$  &  2.25  \\
                    &  2048  &  $6.6998 \times 10^{-8}$ &  2.27   &  2048  & $9.3659 \times 10^{-8}$  &  2.25  \\
                    &  4096  &  $1.3989 \times 10^{-8}$ &  2.26   &  4096  & $1.9675 \times 10^{-8}$  &  2.25  \\
\midrule
                    &  512   &  $2.2987 \times 10^{-7}$ &  *      & 512   &  $3.2914 \times 10^{-7}$ &  *     \\
   $\alpha =0.5$    &  1024  &  $3.7741 \times 10^{-8}$ &  2.61   & 1024  &  $5.6602 \times 10^{-8}$ &  2.54  \\
                    &  2048  &  $6.3505 \times 10^{-9}$ &  2.57   & 2048  &  $9.7994 \times 10^{-9}$ &  2.53  \\
                    &  4096  &  $1.0860 \times 10^{-9}$ &  2.55   & 4096  &  $1.7057 \times 10^{-9}$ & 2.52  \\
\midrule
                    &  512   & $3.2917 \times 10^{-8}$  &  *      &  512  & $6.3104 \times 10^{-8}$   &  *     \\
   $\alpha =0.95$   &  1024  & $4.1130 \times 10^{-9}$  &  3.00   &  1024 & $7.9035 \times 10^{-9}$   &  3.00  \\
                    &  2048  & $5.1321 \times 10^{-10}$ &  3.00   &  2048 & $9.8707 \times 10^{-10}$  &  3.00  \\
                    &  4096  & $6.3983 \times 10^{-11}$ &  3.00   &  4096 & $1.2309 \times 10^{-10}$  &  3.00  \\
\bottomrule
\end{tabular}
%    }
\end{table}

\begin{table}
\center \footnotesize
\caption{Spatial errors and convergence rates for \textbf{Example 3}.} \label{tab4}
%\vspace{-0.05in}
%\resizebox{\textwidth}{!}{
\begin{tabular}{cccccccccccc}
\toprule
&  \multicolumn{4}{c}{$r=1$}        &  \multicolumn{2}{c}{$r=1.4$}  \\
\cmidrule{3-4}  \cmidrule{6-7}
& $J$  & ${\rm Error}^1_J(\tau,h)$ & $\text{Rate}^1_{J}$ & $J$  & ${\rm Error}^1_J(\tau,h)$ & $\text{Rate}^1_{J}$  \\
\midrule
                    &   64   &  $2.3089 \times 10^{-3}$ &  *      &  64   & $2.3092 \times 10^{-3}$  &  *     \\
   $\alpha =0.25$   &   128  &  $5.9208 \times 10^{-4}$ &  1.96   &  128  & $5.9216 \times 10^{-4}$  &  1.96  \\
                    &   256  &  $1.4651 \times 10^{-4}$ &  2.01   &  256  & $1.4653 \times 10^{-4}$  &  2.01  \\
                    &   512  &  $3.4902 \times 10^{-5}$ &  2.07   &  512  & $3.4907 \times 10^{-5}$  &  2.07  \\
\midrule
                    &   64   &  $2.1356 \times 10^{-3}$ &  *      &  64   & $2.1357 \times 10^{-3}$  &  *     \\
   $\alpha =0.5$    &   128  &  $5.5083 \times 10^{-4}$ &  1.95   &  128  & $5.5086 \times 10^{-4}$  &  1.95  \\
                    &   256  &  $1.3673 \times 10^{-4}$ &  2.01   &  256  & $1.3673 \times 10^{-4}$  &  2.01  \\
                    &   512  &  $3.2637 \times 10^{-5}$ &  2.07   &  512  & $3.2639 \times 10^{-5}$  &  2.07  \\
\midrule
                    &   64   &  $5.8484 \times 10^{-4}$ &  *      &  64   & $5.8486 \times 10^{-4}$  &  *     \\
   $\alpha =0.95$   &   128  &  $1.5368 \times 10^{-4}$ &  1.93   &  128  & $1.5369 \times 10^{-4}$  &  1.93  \\
                    &   256  &  $3.8516 \times 10^{-5}$ &  2.00   &  256  & $3.8517 \times 10^{-5}$  &  2.00  \\
                    &   512  &  $9.2449 \times 10^{-6}$ &  2.06   &  512  & $9.2452 \times 10^{-6}$  &  2.06  \\
\bottomrule
\end{tabular}
%    }
\end{table}

\textbf{Example 2 }  Let $\Omega=(0,2\pi)\times(0,2\pi)$, $f(x_1,x_2)=\sin(x_1)\sin(x_2)$ and $u_0(x_1,x_2)=\sin(\f{x_1}{2})\sin(\f{x_2}{2})$. Analogous to  \textbf{Example 1}, one could    apply \eqref{FE}   to  approximate \eqref{kk} without truncation. We present the numerical results in Table \ref{tab2}, from which we could arrive at similar observations as those in \textbf{Example 1}.

\subsection{Convergence behavior of the scheme (\ref{FE})}

We further carry out some numerical examples to verify the convergence behavior of the fully discrete scheme \eqref{FE}.

\textbf{Example 3} Let the domain $\Omega=(0,\pi)$ with $f= e^{x}$ and initial data $u_0(x)=\sin(x)$, which accordingly gives  $\tilde{f}=e^{x}-\sin(x)$.  We thus arrive at
\begin{align*}
    \Delta^{\ell} \tilde f = e^{x}-(-1)^\ell\sin(x) \neq 0 \text{ on } \partial\Omega\times [0,T], \quad \ell = 0,1,\cdots, m.
\end{align*}

To perform the truncation approximation, we first introduce   the eigenpairs of the operator $-\Delta$ with  homogeneous Dirichlet boundary conditions:
$\lambda_q = q^2$ and $\phi_q(x) = \sqrt{\frac{2}{\pi}} \sin ( q x )$ for  $q=1,2,\cdots$ \cite{Evans}.
We choose a sufficiently large $L=1000$ in \eqref{gg} such that  the truncation error \eqref{er:trun} could be omitted, we thus have
\begin{align*}
 \Pi_L(\Delta^{\ell} \tilde f) = \sum_{q=1}^L (\Delta^{\ell} \tilde f,\phi_q)\phi_q,\quad \ell = 0,1,\cdots, m, \quad  (\Delta^{\ell} \tilde f,\phi_q) = \sqrt{\frac{2}{\pi}}\left(\frac{q\left[1-(-1)^qe^\pi\right]}{1+q^2} - (-1)^\ell\frac{\pi}{2}\delta_{q1}\right),
\end{align*}
where $\delta_{q1}$ is the Kronecker delta.
In Tables \ref{tab3}--\ref{tab4}, we present  the numerical results of  the BDF3--L2 method \eqref{FE} under  $r = 1$ and $r = 1.4$, respectively. We observe from Tables \ref{tab3}--\ref{tab4} that the scheme \eqref{FE} achieves the temporal convergence of order $2+\alpha$ and the spatial convergence of the second order both on the uniform mesh and the graded mesh, which are consistent with the theoretical findings in Theorem \ref{thm4.1}.

\begin{table}
\center \footnotesize
\caption{Temporal errors and convergence rates for \textbf{Example 4}.} \label{tab5}
%\vspace{-0.05in}
%\resizebox{\textwidth}{!}{
\begin{tabular}{cccccccccccc}
\toprule
&  \multicolumn{4}{c}{$r=1$}        &  \multicolumn{2}{c}{$r=1.4$}  \\
\cmidrule{3-4}  \cmidrule{6-7}
& $N$  & ${\rm Error}^2_N(\tau,h)$ & $\text{Rate}^2_{N}$ & $N$  & ${\rm Error}^2_N(\tau,h)$ & $\text{Rate}^2_{N}$  \\
\midrule
                    &  512   &  $1.9254 \times 10^{-3}$ &  *      &  512   & $2.6275 \times 10^{-3}$  &  *     \\
   $\alpha =0.25$   &  1024  &  $3.9769 \times 10^{-4}$ &  2.28   &  1024  & $5.5220 \times 10^{-4}$  &  2.25  \\
                    &  2048  &  $8.2774 \times 10^{-5}$ &  2.26   &  2048  & $1.1603 \times 10^{-4}$  &  2.25  \\
                    &  4096  &  $1.7303 \times 10^{-5}$ &  2.26   &  4096  & $2.4380 \times 10^{-5}$  &  2.25  \\
\midrule
                    &  512   &  $3.7090 \times 10^{-6}$ &  *      & 512   &  $5.4458 \times 10^{-6}$ &  *     \\
   $\alpha =0.5$    &  1024  &  $6.1836 \times 10^{-7}$ &  2.58   & 1024  &  $9.4290 \times 10^{-7}$ &  2.53  \\
                    &  2048  &  $1.0515 \times 10^{-7}$ &  2.56   & 2048  &  $1.6405 \times 10^{-7}$ &  2.52  \\
                    &  4096  &  $1.8119 \times 10^{-8}$ &  2.54   & 4096  &  $2.8656 \times 10^{-8}$ &  2.52  \\
\midrule
                    &  512   & $5.4362 \times 10^{-8}$  &  *      &  512  & $1.0809 \times 10^{-7}$   &  *     \\
   $\alpha =0.95$   &  1024  & $6.7649 \times 10^{-9}$  &  3.01   &  1024 & $1.3477 \times 10^{-8}$   &  3.00  \\
                    &  2048  & $8.4089 \times 10^{-10}$ &  3.01   &  2048 & $1.6760 \times 10^{-9}$   &  3.01  \\
                    &  4096  & $1.0448 \times 10^{-10}$ &  3.01   &  4096 & $2.0814 \times 10^{-10}$  &  3.01  \\
\bottomrule
\end{tabular}
%    }
\end{table}

\textbf{Example 4}  Let the domain $\Omega=(0,\pi)\times(0,\pi)$ with $f = 1+e^{\frac{x_1+x_2}{\sqrt{2}}}$ and  $u_0= \sin(x_1)\sin(x_2)$, then we have $\tilde{f}=1+e^{\frac{x+y}{\sqrt{2}}} -2 \sin(x_1)\sin(x_2)$, which satisfies
\begin{align*}
    \Delta^{\ell} \tilde f  \neq 0 \text{ on } \partial\Omega\times [0,T], \quad \ell = 0,1,\cdots, m.
\end{align*}
Similarly, we adopt the eigen-expansions of the operator $-\Delta$ on the  rectangular domain $\Omega$ \cite{Evans} to obtain  $\lambda_{q,l}=q^2+l^2$, and $\phi_{q,l}(x_1,x_2) = \frac{2}{\pi} \sin(q x_1) \sin(l x_2)$. We choose $L=1000$ to arrive at
\begin{align*}
  \Pi_L(\Delta^{\ell} \tilde f) = \sum_{q=1}^{L}\sum_{l=1}^{L} (\Delta^{\ell} \tilde f,\phi_{q,l}) \, \phi_{q,l},\quad \ell = 0,1,\cdots, m
\end{align*}
with
\begin{align*}
(\Delta^{\ell}\widetilde f,\phi_{q,l})
={}&
\frac{2\delta_{\ell0}}{\pi}
\frac{1-(-1)^q}{q}
\frac{1-(-1)^l}{l}
\\
&+
\frac{2}{\pi}
\frac{q\left(1-(-1)^q e^{\pi/\sqrt2}\right)}
     {q^2+\frac12}
\frac{l\left(1-(-1)^l e^{\pi/\sqrt2}\right)}
     {l^2+\frac12}
+
\frac{\pi}{2}(-2)^{\ell+1}\delta_{q1}\delta_{l1},
\end{align*}
where $\delta_{ij}$ denotes the Kronecker delta. The numerical results are presented in Tables \ref{tab5}--\ref{tab6}, which again show the second-order accuracy in space as well as the  $(2+\alpha)$-th order accuracy  in time of scheme \eqref{FE}, as proved in Theorem \ref{thm4.1}.

\begin{table}
\center \footnotesize
\caption{Spatial errors and convergence rates for \textbf{Example 4}.} \label{tab6}
%\vspace{-0.05in}
%\resizebox{\textwidth}{!}{
\begin{tabular}{cccccccccccc}
\toprule
&  \multicolumn{4}{c}{$r=1$}        &  \multicolumn{2}{c}{$r=1.4$}  \\
\cmidrule{3-4}  \cmidrule{6-7}
& $J$  & ${\rm Error}^2_J(\tau,h)$ & $\text{Rate}^2_{J}$ & $J$  & ${\rm Error}^2_J(\tau,h)$ & $\text{Rate}^2_{J}$  \\
\midrule
                    &   64   &  $5.7040 \times 10^{-1}$ &  *      &  64   & $5.7035 \times 10^{-1}$  &  *     \\
   $\alpha =0.25$   &   128  &  $1.4269 \times 10^{-1}$ &  2.00   &  128  & $1.4268 \times 10^{-1}$  &  2.00  \\
                    &   256  &  $3.5687 \times 10^{-2}$ &  2.00   &  256  & $3.5684 \times 10^{-2}$  &  2.00  \\
                    &   512  &  $8.9261 \times 10^{-3}$ &  2.00   &  512  & $8.9253 \times 10^{-3}$  &  2.00  \\
\midrule
                    &   64   &  $1.1644 \times 10^{-2}$ &  *      &  64   & $1.1645 \times 10^{-2}$  &  *     \\
   $\alpha =0.5$    &   128  &  $3.0854 \times 10^{-3}$ &  1.92   &  128  & $3.0856 \times 10^{-3}$  &  1.92  \\
                    &   256  &  $7.8230 \times 10^{-4}$ &  1.98   &  256  & $7.8234 \times 10^{-4}$  &  1.98  \\
                    &   512  &  $1.9174 \times 10^{-4}$ &  2.03   &  512  & $1.9175 \times 10^{-4}$  &  2.03  \\
\midrule
                    &   64   &  $3.2658 \times 10^{-3}$ &  *      &  64   & $3.2659 \times 10^{-3}$  &  *     \\
   $\alpha =0.95$   &   128  &  $8.8170 \times 10^{-4}$ &  1.89   &  128  & $8.8173 \times 10^{-4}$  &  1.89  \\
                    &   256  &  $2.2486 \times 10^{-4}$ &  1.97   &  256  & $2.2487 \times 10^{-4}$  &  1.97  \\
                    &   512  &  $5.4874 \times 10^{-5}$ &  2.03   &  512  & $5.4876 \times 10^{-5}$  &  2.03  \\
\bottomrule
\end{tabular}
%    }
\end{table}

% \section{Concluding remarks}\label{sec6}

% In summary, we have established a

% In our future work, we will consider the MSD method for the diffusion-wave model by using the nonuniform BDF3--L2 method.

\section*{Declaration}

\vskip 3 mm
\noindent\textbf{Conflict of interest} The authors declare that they have no known competing financial interests or personal relationships that could have appeared to influence the work reported in this paper.

%\vskip 3mm
%\noindent\textbf{Acknowledgments}

\vskip 3 mm
\noindent\textbf{Funding}
 This work was supported by National Natural Science Foundation of China (No.~126011058), China Postdoctoral Science Foundation (No.~2024M762459), Natural Science Foundation of Hubei Province (No. 2025AFB109), and Postdoctor Project of Hubei Province (No. 2025HBBSHCXB021).

\vskip 3 mm
\noindent\textbf{Data Availability} The datasets are available from the corresponding author upon reasonable request.

%%%%%%%%%%%%%%%%%%%%%%%%%%%%%%%%%%%%%%%%%%%%%%%%%%%%%%%%%%%%%%%%%%%%%%%%%%%%%%%%%%%

\end{document}